\providecommand{\U}[1]{\protect\rule{.1in}{.1in}}
\newcommand{\remove}[1]{ }
\providecommand{\norm}[1]{\lVert#1\rVert}
\newtheorem{theorem}{Theorem}
\newtheorem{definition}[theorem]{Definition}
\newtheorem{proposition}[theorem]{Proposition}
\newenvironment{proof}[1][Proof]{\noindent\textbf{#1.} }{\ \rule{0.5em}{0.5em}}
\begin{document}

\title{Computability, Noncomputability, and Hyperbolic Systems}
\author{Ning Zhong\\DMS, University of Cincinnati,\\Cincinnati, OH 45221-0025, U.S.A.
\and Daniel S. Gra\c{c}a\\DM/FCT, Universidade do Algarve, C. Gambelas,\\8005-139 Faro, Portugal\\\& SQIG/Instituto de Telecomunica\c{c}\~{o}es, Lisbon, Portugal
\and Jorge Buescu\\DM/FCUL, University of Lisbon, Portugal \\\& CMAF, Lisbon, Portugal}
\maketitle

\begin{abstract}
In this paper we study the computability of the stable and unstable manifolds
of a hyperbolic equilibrium point. These manifolds are the essential feature
which characterizes a hyperbolic system. We show that (i) locally these
manifolds can be computed, but (ii) globally they cannot (though we prove they
are semi-computable). We also show that Smale's horseshoe, the first example
of a hyperbolic invariant set which is neither an equilibrium point nor a
periodic orbit, is computable.
\end{abstract}

\section{Introduction}

Dynamical systems are powerful objects which can be found in numerous
applications. However, this versatility does not come without cost: dynamical
systems are objects which are inherently very hard to study.

Recently, digital computers have been successfully used to analyze dynamical
systems, through the use of numerical simulations. However, with the known
existence of phenomena like the \textquotedblleft butterfly
effect\textquotedblright\ -- a small perturbation on initial conditions can be
exponentially amplified along time -- and the fact that numerical simulations
always involve some truncation errors, the reliability of such simulations for
providing information about the long-term evolution of the systems is questionable.

For instance, numerical simulations suggested the existence of a
\textquotedblleft strange\textquotedblright\ attractor for the Lorenz system
\cite{Lor63}, and it was widely believed that such an attractor existed.
However, the formal proof of its existence remained elusive, being at the
heart of the 14th from the list of 18 problems that the Fields medalist
S.\ Smale presented for the new millennium \cite{Sma98}. Finally, after a 35
years hiatus, a computer-aided formal proof was achieved in \cite{Tuc98},
\cite{Tuc99}, but this example painstakingly illustrates the difference
between numerical evidence and a full formal proof, even if computer-based.

For the above reasons, it is important to understand which properties can be
accurately computed with a computer, and those which cannot. We have
previously dwelled on this subject on our paper \cite{GZ10} which focused on
\textquotedblleft stable\textquotedblright\ dynamical systems having
hyperbolic attractors. The latter systems were extensively studied in the XXth
century and were thought to correspond to the class of \textquotedblleft
meaningful\textquotedblright\ dynamical systems. This happened for several
reasons: the \textquotedblleft stability\textquotedblright\ -- structural
stability -- which implies robustness of behavior to small perturbations was
believed to be of uttermost importance, since it was thought that only such
systems could exist in nature; there were also results (Peixoto's Theorem
\cite{Pei62}) which showed that, in the plane, these systems are dense and
their invariant sets (which include all attractors) can be fully characterized
(they can only be points or periodic orbits).There was hope that such results
would generalize for spaces of higher dimensions, but it was shattered by S.
Smale, who showed that there exist \textquotedblleft chaotic\textquotedblright%
\ hyperbolic invariant sets which are neither a point nor a periodic orbit
(Smale's \ horseshoe \cite{Sma67}) and that for dimensions $\geq3$
structurally stable systems are not dense \cite{Sma66}.

Moreover, the notion of structural stability was shown to be too strong a
requirement. In particular, the Lorenz attractor, which is embedded in a
system modeling weather evolution, is not structurally stable, although it is
stable to perturbations in the parameters defining the system \cite{Via00} (in
other words, robustness may not be needed for \emph{all} mathematical
properties of the system).

Despite the fact that systems with hyperbolic attractors are no longer
regarded as \textquotedblleft the\textquotedblright\ meaningful class of
dynamical systems for spaces of dimension $n\geq3$, they still play a central
role in the study of dynamical systems. In essence, what characterizes a
hyperbolic set is the existence at each point of an invariant splitting of the
tangent space into stable and unstable directions, which generate the local
stable and unstable manifolds (see Section \ref{Sec:Dynamical_systems}).

In this paper we will study the computability of the stable and
unstable manifolds for hyperbolic equilibrium points. The stable and
unstable manifolds are constructs which derive from the stable
manifold theorem. This theorem states that for each hyperbolic
equilibrium point $x_{0}$ (see Section \ref{Sec:Dynamical_systems}
for a definition), there exists a manifold $S$ such that a
trajectory on $S$ will converge to $x_{0}$ at an exponential rate as
$t\rightarrow+\infty$. Moreover every trajectory converging to
$x_{0}$ lies entirely in $S$. $S$ is call a stable manifold.
Similarly one can obtain the unstable manifold $U$, using similar
conditions when $t\rightarrow-\infty$. (The stable and unstable
manifolds of a hyperbolic equilibrium point are depicted in Fig.\
\ref{fig:hyperbolic}.) It is important to note that classical proofs
on existence of $S$ and $U$ are non-constructive. Thus computability (or, for the matter, semi-computability) of the stable/unstable manifold does not follow straightforwardly from these proofs.

We address the following basic question: given a dynamical system
and some hyperbolic equilibrium point, can we compute its stable and
unstable manifolds? We will show that the answer is twofold: (i) we
can compute the stable and unstable manifolds $S$ and $U$ given by
the stable manifold theorem which are, in some sense, local, since
there are (in general) trajectories starting in points which do not
lie in $S$ that will converge to $x_{0}$ (the set of all these point
united with $S$ would yield the global stable manifold -- see
Section \ref{Sec:Dynamical_systems}), but (ii) the global stable and
unstable manifolds are not, in general, computable from the
description of the system and the hyperbolic point. Since classical
proofs on existence of $S$ and $U$ are non-constructive, a different
approach is needed if one wishes to construct an algorithm that
computes $S$ and $U$. Our approach aiming at a constructive proof
makes use of function-theoretical treatment of the resolvents (see
the first paragraph of Section 4 for more details).

A most likely interpretation for these results is that, since the definition of
hyperbolicity is local, computability also applies locally. However global
homoclinic tangles can occur \cite{GH83}, leading to chaotic behavior for such
systems. So it should not be expected that the global behavior of stable and
unstable manifolds be globally computable in general, as indeed our results show.

In the end of the paper we also show that the prototypical example of an
hyperbolic invariant set which is neither a fixed point nor a periodic orbit
-- the Smale horseshoe -- is computable.

The computability of simple attractors -- hyperbolic periodic orbits and
equilibrium points -- was studied in our previous papers \cite{Zho09},
\cite{GZ10} for planar dynamics, as well as the computability of their
respective domains of attraction.

A number of papers study dynamical systems, while not exactly in the context
used here. For example the papers\ \cite{Moo90}, \cite{BBKT01}, \cite{Col05},
\cite{BY06}, \cite{Hoy07} provide interesting results about the long-term
behavior of dynamical systems using, among others, symbolic dynamics or a
statistical approach. See \cite{BGZ10} for a more detailed review of the
literature.%
\begin{center}
\begin{figure}[ptb]
\begin{center}
\includegraphics[width=5cm]{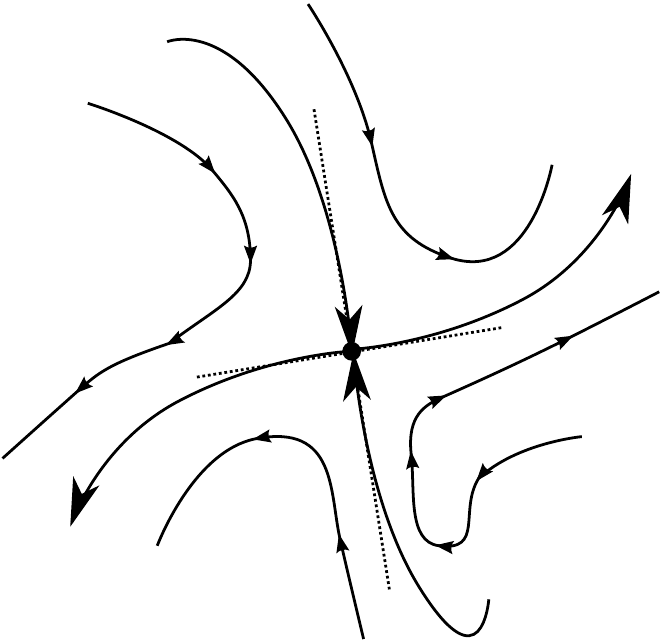}
\end{center}
\caption{Dynamics near a hyperbolic equilibrium point.}
\label{fig:hyperbolic}
\end{figure}
\end{center}%

\section{Dynamical systems\label{Sec:Dynamical_systems}}

In short, a dynamical system is a pair consisting of a state space where the
action occurs and a function $f$ which defines the evolution of the system
along time. See \cite{HS74} for a precise definition. In general one can
consider two kinds of dynamical systems: discrete ones, where time is discrete
and one obtains the evolution of the system by iterating the map $f$, and
continuous ones, where the evolution of the system along time is governed by a
differential equation of the type%
\begin{equation}
\dot{x}=f(x) \label{Eq:ODE}%
\end{equation}
where $t$ is the independent variable (the \textquotedblleft
time\textquotedblright) and $\dot{x}$ denotes the derivative $dx(t)/dt$.
Continuous-time systems can be translated to discrete-time using time-one maps
and, in some cases, the Poincar\'{e} map, and vice-versa (using the suspension
method). \ Therefore to study dynamical systems one can focus on
continuous-time ones.

Although the essential feature of hyperbolic attractors is the existence of an
invariant splitting of the tangent space into stable and unstable directions
generating the local stable and unstable manifolds, for a hyperbolic
equilibrium point $x_{0}$, it can be described equivalently in terms of the
linearization of the flow around $x_{0}.$ We recall that $x_{0}$ is an
equilibrium point of (\ref{Eq:ODE}) iff $f(x_{0})=0$.

\begin{definition}
An equilibrium point $x_{0}$ of (\ref{Eq:ODE}) is hyperbolic if none of the
eigenvalues of $Df(x_{0})$ has zero real part.
\end{definition}

According to the value of the real part of these eigenvalues, one can
determine the behavior of the linearized flow near $x_{0}$: if the eigenvalue
has negative real part, then the flow will converge to $x_{0}$ when it follows
the direction given by the eigenvector associated to this eigenvalue. A
similar behavior will happen when the eigenvalue has positive real part, with
the difference that convergence happens when $t\rightarrow-\infty$. See Fig.
\ref{fig:hyperbolic}. The space generated by the eigenvectors associated to
eigenvalues of $Df(x_{0})$ with negative real part is called the stable
subspace $E^{s}_{Df(x_{0})}$, and the space generated by the eigenvectors
associated to eigenvalues with positive real part is called the unstable
subspace $E^{u}_{Df(x_{0})}$.

We now state the Stable Manifold Theorem (as it appears in \cite{Per01}).

\begin{proposition}
[Stable Manifold Theorem]Let $E$ be an open subset of $\mathbb{R}^{n}$
containing the origin, let $f\in C^{1}(E)$, and let $\phi_{t}$ be the flow of
the system (\ref{Eq:ODE}). Suppose that $f(0)=0$ and that $Df(0)$ has $k$
eigenvalues with negative real part and $n-k$ eigenvalues with positive real
part (\emph{i.e.} 0 is a hyperbolic equilibrium point). Then there exists a
$k$-dimensional differentiable manifold $S$ tangent to the stable subspace
$E^{s}_{Df(0)}$ such that for all $t\geq0$, $\phi_{t}(S)\subseteq S$ and for
all $x_{0}\in S$%
\[
\lim_{t\rightarrow+\infty}\phi_{t}(x_{0})=0;
\]
and there exists an $n-k$ dimensional differentiable manifold $U$ tangent to
the unstable subspace $E^{u}_{Df(0)}$ such that for all $t\leq0$, $\phi
_{t}(S)\subseteq S$ and for all $x_{0}\in U$%
\[
\lim_{t\rightarrow-\infty}\phi_{t}(x_{0})=0.
\]

\end{proposition}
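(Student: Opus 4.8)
The plan is to follow the classical Liapunov--Perron approach via successive approximations, the route taken in \cite{Per01}. First I would normalize the system: writing $A=Df(0)$ and $F(x)=f(x)-Ax$, one has $F(0)=0$ and $DF(0)=0$, so $F$ is $C^{1}$ and both $F$ and its Lipschitz constant are small on a ball $B_{\delta}(0)$. A linear change of coordinates puts $A$ in the block form $\mathrm{diag}(P,Q)$, where the $k\times k$ block $P$ has all eigenvalues with negative real part and the $(n-k)\times(n-k)$ block $Q$ has all eigenvalues with positive real part. Choosing $\alpha>0$ strictly smaller than $\min\{\,\lvert\mathrm{Re}\,\lambda\rvert\,\}$, one obtains a constant $K\geq 1$ with $\norm{e^{Pt}}\leq Ke^{-\alpha t}$ for $t\geq 0$ and $\norm{e^{Qt}}\leq Ke^{\alpha t}$ for $t\leq 0$. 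Let $U$ and $V=I-U$ be the spectral projections onto the stable and unstable subspaces.

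Next I would characterize the trajectories that decay to the origin by an integral equation. A solution $u(t)$ of $\dot{u}=Au+F(u)$ that remains bounded (hence tends to $0$) for $t\geq 0$ and has prescribed stable component $a\in E^{s}_{Df(0)}$ at time $0$ must satisfy
\[
u(t,a)=e^{At}a+\int_{0}^{t}e^{A(t-s)}UF(u(s,a))\,ds-\int_{t}^{\infty}e^{A(t-s)}VF(u(s,a))\,ds .
\]
I would solve this by the iteration $u^{(0)}\equiv 0$, with $u^{(j+1)}$ obtained by substituting $u^{(j)}$ into the right-hand side. Using the exponential bounds above together with the smallness of the Lipschitz constant of $F$ on $B_{\delta}(0)$, one shows inductively that each $u^{(j)}(\cdot,a)$ is defined and bounded by $2K\abs{a}e^{-\alpha t/2}$ for $\abs{a}$ small, and that $\abs{u^{(j+1)}-u^{(j)}}\leq 2^{-j}K\abs{a}e^{-\alpha t/2}$, so the sequence converges uniformly to a continuous solution $u(t,a)$ that decays exponentially as $t\rightarrow+\infty$.

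Then I would extract the manifold. Define $\psi:E^{s}_{Df(0)}\cap B_{\delta}\rightarrow E^{u}_{Df(0)}$ by $\psi(a)=-\int_{0}^{\infty}e^{-As}VF(u(s,a))\,ds$, so that $(a,\psi(a))=u(0,a)$ and the local stable manifold is $S=\set{(a,\psi(a))}$. Forward invariance $\phi_{t}(S)\subseteq S$ and the limit $\phi_{t}(x_{0})\rightarrow 0$ for $x_{0}\in S$ follow from uniqueness of solutions and the decay estimate; conversely, a Gronwall-type argument shows that any trajectory converging to $0$ satisfies the integral equation, hence lies in $S$. Finally, differentiating the integral equation formally with respect to $a$ and running the same contraction argument on the derivative iterates shows $\psi\in C^{1}$ with $\psi(0)=0$ and $D\psi(0)=0$, so $S$ is tangent to $E^{s}_{Df(0)}$ at the origin. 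The unstable manifold $U$ is then obtained by applying the entire argument to the time-reversed field $-f$. I expect the main obstacle to be this last step: establishing $C^{1}$ regularity of $\psi$ (rather than mere continuity) requires a separate uniform-convergence argument for the formally differentiated iterates, which is the technically most delicate part of the proof.
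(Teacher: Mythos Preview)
The paper does not prove this proposition; it is quoted as a classical result from \cite{Per01}. Your outline is correct and is precisely the Liapunov--Perron argument that Perko gives, so there is nothing to fault relative to what the paper invokes.

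For comparison, when the paper proves its own contribution---the \emph{computable} stable manifold theorem (Theorem~\ref{t1})---it follows essentially the same integral-equation and successive-approximation scheme you sketch, but with one deliberate departure. Rather than changing coordinates to put $A$ in the block form $\mathrm{diag}(P,Q)$ via a basis of generalized eigenvectors, the paper splits $e^{At}$ directly into stable and unstable parts $I_{\Gamma_1}(t)$ and $I_{\Gamma_2}(t)$ using contour integrals of the resolvent $(A-\xi I_n)^{-1}$ around the spectrum in the left and right half-planes. The reason is that passing from eigenvalues to eigenvectors is discontinuous and hence non-computable, so the coordinate change in your first step cannot be made algorithmic; the resolvent integrals yield the same spectral projections without any eigenvector computation. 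Appendix~1 of the paper makes the equivalence explicit by identifying $I_{\Gamma_1}(t)$ and $I_{\Gamma_2}(t)$ with $C\,\mathrm{diag}(e^{Pt},0)\,C^{-1}$ and $C\,\mathrm{diag}(0,e^{Qt})\,C^{-1}$. Thus your route suffices for the classical proposition as stated, while the paper's variant buys computability at the cost of a more analytic treatment of the linear part. Note also that Theorem~\ref{t1} does not assert $C^1$ regularity of the graph map or tangency of $S$ to $E^s_{Df(0)}$, so the step you single out as most delicate is simply not carried out in the paper's own argument.
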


From the local stable and unstable manifolds given by the Stable Manifold
Theorem, one can define the \emph{global stable and unstable manifolds} of
(\ref{Eq:ODE}) at a hyperbolic equilibrium point $x_{0}$ by%
\begin{align}
W_{f}^{s}(x_{0}) &  = \bigcup_{j=0}^{\infty}
\phi_{-j}(S)\label{Eq:global}\\
W_{f}^{u}(x_{0}) &  =
\bigcup_{j=0}^{\infty}
\phi_{j}(U),\nonumber
\end{align}
respectively, where
\[
\phi_{t}(A)=\{x(t)|x\text{ is a solution of (\ref{Eq:ODE}) with }x(0)\in A\}.
\]
We note that $W^{s}(x_{0})$ and $W^{u}(x_{0})$ are $F_{\sigma}$-sets of
$\mathbb{R}^{n}$ (recall that a subset $F\subseteq\mathbb{R}^{n}$ is called an
$F_{\sigma}$-set if $F=\bigcup_{j=0}^{\infty}A_{j}$, where $A_{j}$,
$j\in\mathbb{N}$, is a closed subset of $\mathbb{R}^{n}$).

We end this section by introducing Smale's horseshoe \cite{Sma67}. We refer
the reader to \cite{GH83} for a more thorough discussion of this set. In
essence, Smale's horseshoe appears when we consider a map $f$ defined over
$S=[0,1]^{2}$. This map is bijective and performs a linear vertical expansion
of $S$, and a linear horizontal contraction of $S$, by factors $\mu>1$ and
$0<\lambda<1$, respectively, followed by a folding.

\begin{definition}
In the conditions defined above (see \cite{GH83} or \cite{HSD04} for precise
definitions), the Smale horseshoe is the set $\Lambda$ given by%
\[
\Lambda=%
{\displaystyle\bigcap\limits_{j=-\infty}^{+\infty}}
f^{j}(S).
\]
This set is invariant for the function $f$ (i.e.\ $f(\Lambda)=\Lambda$).
\end{definition}

\section{Computable analysis}

Now that we have introduced the main concepts of dynamical systems theory we
will use, we need to introduce concepts related to computability. The theory
of computation can be rooted in the seminal work of Turing, Church, and
others, which provided a framework in which to achieve computation over
discrete identities or, equivalently, over the integers.

However, this definition was not enough to cover computability over continuous
structures, and was then developed by other authors such as Turing himself
\cite{Tur36}, Grzegorczyk \cite{Grz57}, or Lacombe \cite{Lac55} to originate
\emph{computable analysis}.

The idea underlying computable analysis to compute over a set $A$ is to encode
each element $a$ of $A$ by a countable sequence of symbols from a finite
alphabet (called a $\rho$-\emph{name} for $a$). Each sequence can encode at
most one element of $A$. The more elements we have from a sequence encoding
$a$, the more precisely we can pinpoint $a$. From this point of view, it
suffices to work only with names when performing a computation over $A$. To
compute with names, we use Type-2 machines, which are similar to Turing
machines, but (i) have a read-only tape, where the input (i.e.\ the sequence
encoding it) is written; (ii) have a write-only output tape, where the head
cannot move back and the sequence encoding the output is written. For more
details the reader is referred to \cite{PR89}, \cite{Ko91}, \cite{Wei00}.

At any finite amount of time we can halt the computation, and we will have a
partial result on the output tape. The more time we wait, the more accurate
this result will be. We now introduce notions of computability over
$\mathbb{R}^{n}$.

\begin{definition}
\begin{enumerate}
\item A sequence $\{r_{k}\}$ of rational numbers is called a $\rho$-name of a
real number $x$ if there are three functions $a,b$ and $c$ from $\mathbb{N}$
to $\mathbb{N}$ such that for all $k\in\mathbb{N}$, $r_{k}=(-1)^{a(k)}%
\frac{b(k)}{c(k)+1}$ and
\begin{equation}
\left\vert r_{k}-x\right\vert \leq\frac{1}{2^{k}}. \label{PourElReal}%
\end{equation}

\item A double sequence $\{r_{l,k}\}_{l,k\in\mathbb{N}}$ of rational numbers
is called a $\rho$-name for a sequence $\{x_{l}\}_{l\in\mathbb{N}}$ of real
numbers if there are three functions $a,b,c$ from $\mathbb{N}^{2}$ to
$\mathbb{N}$ such that, for all $k, l\in\mathbb{N}$, $r_{l,k}=(-1)^{a(l,k)}%
\frac{b(l,k)}{c(l,k)+1}$ and
\[
\left\vert r_{l,k}-x_{l}\right\vert \leq\frac{1}{2^{k}}.
\]

\item A real number $x$ (a sequence $\{x_{l}\}_{l\in\mathbb{N}}$ of real
numbers) is called computable if it has a computable $\rho$-name, i.e. the
functions $a$, $b$, and $c$ are computable or, equivalently, there is a Type-2
machine that computes a $\rho$-name of $x$ ($\{x_{l}\}_{l\in\mathbb{N}}$,
respectively) without any input.
\end{enumerate}
\end{definition}

In general, $\rho$-names for real numbers do not need to be exactly those
described above: different definitions can provide the same set of computable
points (e.g.\ the $\rho$-name could be a sequence of intervals containing
$x_{0}$ with diameter $1/k$ for $k\geq1$, etc.). A notable exception is the
decimal expansion of $x_{0}$, which cannot be used since it can lead to
undesirable behavior \cite{Tur37} because this representation does not respect
the topology of the space $\mathbb{R}$.

The notion of the $\rho$-name can be extended to points in $\mathbb{R}^{n}$ as
follows: a sequence $\{(r_{1k},r_{2k},\ldots,r_{nk})\}_{k\in\mathbb{N}}$ of
rational vectors is called a $\rho$-name of $x=(x_{1},x_{2},\ldots,x_{n}%
)\in\mathbb{R}^{n}$ if $\{r_{jk}\}_{k\in\mathbb{N}}$ is a $\rho$-name of
$x_{j}$, $1\leq j\leq n$. Using $\rho$-names, we can define computable functions.

\begin{definition}
Let $X$ and $Y$ be two sets, where $\rho$-names can be defined for elements of
$X$ and $Y$. A function $f:X\rightarrow Y$ is computable if there is a Type-2
machine such that on any $\rho$-name of $x\in X$, the machine computes as
output a $\rho$-name of $f(x)\in Y$.
\end{definition}

Next we present a notion of computability for open and closed subsets of
$\mathbb{R}^{n}$ (cf. \cite{Wei00}, Definition 5.1.15). We implicitly use
$\rho$-names. For instance, to obtain names of open subsets of $\mathbb{R}%
^{n}$, we note that the set of rational balls $B(a,r)=\{x\in\mathbb{R}%
^{n}:\left\vert x-a\right\vert <r\}$, where $a\in\mathbb{Q}^{n}$ and
$r\in\mathbb{Q}$, is a basis for the standard topology over $\mathbb{R}^{n}$.
Thus a sequence $\{(a_{k},r_{k})\}_{k\in\mathbb{N}}$ such that $E=\cup
_{k=0}^{\infty}B(a_{k},r_{k})$ gives a $\rho$-name for the open
set $E$.

\begin{definition}
\label{def_r_e_open}

\begin{enumerate}
\item An open set $E\subseteq\mathbb{R}^{n}$ is called recursively enumerable
(r.e.\ for short) open if there are computable sequences $\{a_{k}\}$ and
$\{r_{k}\}$, $a_{k}\in\mathbb{Q}^{n}$ and $r_{k}\in\mathbb{Q}$, such that
\[
E=\cup_{k=0}^{\infty}B(a_{k},r_{k}).
\]
Without loss of generality one can also assume that for any $k\in\mathbb{N}$,
the closure of $B(a_{k},r_{k})$, denoted as $\overline{B(a_{k},r_{k})}$, is
contained in $E$.

\item A closed subset $A\subseteq\mathbb{R}^{n}$ is called r.e.\ closed if
there exists a computable sequence $\{b_{k}\}$, $b_{k}\in\mathbb{Q}^{n}$, such
that $\{b_{k}\}$ is dense in $A$. $A$ is called co-r.e. \ closed if its
complement $A^{c}$ is r.e. open. $A$ is called computable (or recursive) if it
is both r.e.\ and co-r.e.

\item An open set $E\subseteq\mathbb{R}^{n}$ is called computable (or
recursive) if $E$ is r.e.\ open and its complement $E^{c}$ is r.e.\ closed.

\item A compact set $K\subseteq\mathbb{R}^{n}$ is called computable if it is
computable as a closed set and, in addition, there is a rational number $b$
such that $||x||\leq b$ for all $x\in K$.
\end{enumerate}
\end{definition}

In the rest of the paper, we will work exclusively with $C^{1}$ functions $f:
E\to\mathbb{R}^{n}$, where $E$ is an open subset of $\mathbb{R}^{n}$. Thus it
is desirable to present an explicit $\rho$-name for such functions.

\begin{definition}
\label{Def:rho_C1_function} Let $E=\bigcup_{k=0}^{\infty}B(a_{k},r_{k})$,
$a_{k}\in\mathbb{Q}^{n}$ and $r_{k}\in\mathbb{Q}$, be an open subset of
$\mathbb{R}^{n}$ (assuming that the closure of $B(a_{k},r_{k})$ is contained
in $E$) and let $f:E\rightarrow\mathbb{R}^{n}$ be a continuously
differentiable function. Then a ($C^{1}$) $\rho$-name of $f$ is a sequence
$\{P_{l}\}$ of polynomials ($P_{l}:\mathbb{R}^{n}\rightarrow\mathbb{R}^{n}$)
with rational coefficients such that
\[
d_{C^{1}(E)}(f,P_{l})\leq2^{-l}\quad\mbox{for all $l\in \mathbb{N}$}
\]
where
\[
d_{C^{1}(E)}(f,P_{l})=\sum_{k=0}^{\infty}2^{-k}\left(  \frac{||f-P_{l}||_{k}%
}{1+||f-P_{l}||_{k}}+\frac{||Df-DP_{l}||_{k}}{1+||Df-DP_{l}||_{k}}\right)
\]
and
\[
||g||_{k}=\max_{x\in\overline{B(a_{k},r_{k})}}|g(x)|.
\]

\end{definition}

We observe that this $\rho$-name of $f$ contains information on both $f$ and
$Df$ in the sense that $(P_{1}, P_{2}, \ldots)$ is a $\rho$-name of $f$ while
$(DP_{1}, DP_{2}, \ldots)$ is a $\rho$-name of $Df$. See \cite{ZW03} for
further details.

Throughout the rest of this paper, unless otherwise mentioned, we will assume
that, in (\ref{Eq:ODE}), $f$ is continuously differentiable on an open subset
of $\mathbb{R}^{n}$ and we will use the above $\rho$-name for $f$.

\section{Computable stable manifold theorem}

\label{sec_computable}

The stable manifold theorem states that near a hyperbolic equilibrium point
$x_{0}$, the nonlinear system
\begin{equation}
\dot{x}=f(x(t)) \label{e1}%
\end{equation}
has stable and unstable manifolds $S$ and $U$ tangent to the stable and
unstable subspaces ${\mathbb{E}}_{A}^{s}$ and ${\mathbb{E}}_{A}^{u}$ of the
linear system
\begin{equation}
\dot{x}=Ax
\end{equation}
where $A=Df(x_{0})$ is the gradient matrix of $f$ at $x_{0}$. The classical
proof of the theorem relies on the Jordan canonical form of $A$. To reduce $A$
to its Jordan form, one needs to find a basis of generalized eigenvectors.
Since the process of finding eigenvectors from corresponding eigenvalues is
not continuous in general, it is a non-computable process. Thus if one wishes
to construct an algorithm that computes some $S$ and $U$ of (\ref{e1}) at
$x_{0}$, a different method is needed. We will make use of an analytic, rather
than algebraic, approach to the eigenvalue problem that allows us to compute
$S$ and $U$ without calling for eigenvectors. The analytic approach is based
on function-theoretical treatment of the resolvents (see, \emph{e.g.},
\cite{Nag42}, \cite{Kat49}, \cite{Kat50}, and \cite{Rob95}). \newline

Let us first show that the stable and unstable subspaces are computable from
$A$ for the linear hyperbolic systems $\dot{x}=Ax$. We begin with several
definitions. Let $\mathfrak{A}_{H}$ denote the set of all $n\times n$ matrices
such that the linear differential equation $\dot{x}=Ax$, $x\in{\mathbb{R}}%
^{n}$, is hyperbolic, where a linear differential equation $\dot{x}=Ax$ is
defined to be hyperbolic if all the eigenvalues of $A$ have nonzero real part.
The Hilbert-Schmidt norm is used for $A\in\mathfrak{A}_{H}$: $\norm{A}=\sqrt
{\sum_{i=1}^{n}\sum_{j=1}^{n}|a_{ij}|^{2}}$, where $a_{ij}$ is the
$ij^{\mbox{th}}$ entry of $A$. For each $A\in\mathfrak{A}_{H}$, define the
stable subspace $\mathbb{E}^{s}_{A}$ and unstable subspace $\mathbb{E}^{u}%
_{A}$ to be
\begin{align*}
{\mathbb{E}}^{s}_{A}  &  = \mbox{span}\left\{  v\in{\mathbb{R}}^{n}:
\mbox{$v$ is a generalized
eigenvector for an eigenvalue $\lambda$ of $A$} \right. \\
&  \qquad\quad\left.  \mbox{with $Re(\lambda)<0$}\right\}
\end{align*}
\begin{align*}
{\mathbb{E}}^{u}_{A}  &  = \mbox{span}\left\{  v\in{\mathbb{R}}^{n}:
\mbox{$v$ is a generalized
eigenvector for an eigenvalue $\lambda$ of $A$} \right. \\
&  \qquad\quad\left.  \mbox{with $Re(\lambda)>0$}\right\}
\end{align*}
Then ${\mathbb{R}}^{n}={\mathbb{E}}^{s}_{A}\bigoplus{\mathbb{E}}^{u}_{A}$. The
stable subspace $\mathbb{E}^{s}_{A}$ is the set of all vectors which contract
exponentially forward in time while the unstable subspace $\mathbb{E}^{u}_{A}$
is the set of all vectors which contract backward in time.

As mentioned above, the process of finding eigenvectors from corresponding
eigenvalues is not computable; thus the algebraic approach to ${\mathbb{E}%
}^{s}_{A}$ and ${\mathbb{E}}^{u}_{A}$ is a non-computable process. Of course,
this doesn't necessarily imply that it is impossible to compute ${\mathbb{E}%
}^{s}_{A}$ and ${\mathbb{E}}^{u}_{A}$ from $A$, but rather this particular
classical approach fails to be computable. So even for the linear hyperbolic
system $\dot{x}=Ax$, one needs a different approach to treat the
stable/unstable subspace when computability concerned. The approach used below
to treat the stable/unstable subspace is analytic, rather than algebraic. Let
$p_{A}(\lambda)$ be the characteristic polynomial for $A$, $\gamma_{1}$ be any
closed curve in the left half of the complex plane that surrounds (in its
interior) all eigenvalues of $A$ with negative real part and is oriented
counterclockwise, and $\gamma_{2}$ any closed curve in the right half of the
complex plane that surrounds all eigenvalues of $A$ with positive real part,
again with counterclockwise orientation. Then
\[
P^{1}_{A}{\mathbb{R}}^{n}={\mathbb{E}}^{s}_{A}, \qquad P^{2}_{A}{\mathbb{R}%
}^{n}={\mathbb{E}}^{u}_{A}%
\]
where
\[
P^{1}_{A}v=\frac{1}{2\pi i}\int_{\gamma_{1}}(\xi I-A)^{-1}vd\xi, \quad
P^{2}_{A}v=\frac{1}{2\pi i}\int_{\gamma_{2}}(\xi I-A)^{-1}vd\xi
\]
(See Section 4.6 of \cite{Rob95}.)

\begin{theorem}
\label{t0} The map $H^{s}:{\mathfrak{A}}_{H}\rightarrow\mathcal{A}%
({\mathbb{R}}^{n}%
)=\{X|\mbox{$X\subseteq\mathbb{R}^n$ is a closed subset of $\mathbb{R}^n$}\}$,
$A\mapsto{\mathbb{E}}_{A}^{s}$, is computable, where ${\mathfrak{A}}_{H}$ is
represented \textquotedblleft entrywise": $\rho=(\rho_{ij})_{i,j=1}^{n}$ is a
name of $A$ if $\rho_{ij}$ is a $\rho$-name of $a_{ij}$, the $ij$th entry of
$A$.
\end{theorem}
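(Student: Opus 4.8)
The plan is to bypass eigenvectors entirely and compute, directly from the entries of $A$, the Riesz spectral projection $P^{1}_{A}$ recalled just before the statement, then to extract $\mathbb{E}^{s}_{A}=P^{1}_{A}\mathbb{R}^{n}$ from $P^{1}_{A}$ as a recursive closed set in the sense of Definition~\ref{def_r_e_open}. Several ingredients are immediately available from an entrywise name of $A$: the coefficients of the characteristic polynomial $p_{A}(\xi)=\det(\xi I-A)$ (fixed integer-coefficient polynomials in the $a_{ij}$); a rational $R$ with $R>\sqrt{\sum_{i,j}|a_{ij}|^{2}}+1$, so that every eigenvalue $\lambda$ of $A$ satisfies $|\lambda|<R-1$ (the spectral radius is at most the Hilbert--Schmidt norm); and, at any $\xi$ with $p_{A}(\xi)\ne 0$, the resolvent $(\xi I-A)^{-1}$, whose entries are polynomials in $\xi$ and the $a_{ij}$ divided by $p_{A}(\xi)$. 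Hence $(\xi,A)\mapsto(\xi I-A)^{-1}$ is computable wherever $|p_{A}(\xi)|$ is bounded below by a known positive amount.

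The crux is to locate, effectively, a contour separating the stable spectrum from the unstable spectrum. Hyperbolicity says precisely that $p_{A}$ has no root with zero real part, and this makes such a contour \emph{semi-decidable}: for $j=1,2,\dots$ compute the real number $m_{j}:=\min\{\,|p_{A}(\xi)| : |\operatorname{Re}\xi|\le 2^{-j},\ |\operatorname{Im}\xi|\le R\,\}$ (a minimum of a computable function over a computable compact set) to increasing precision. As $j\to\infty$ these compacta shrink to the segment $\{\,it : |t|\le R\,\}$, on which $p_{A}$ does not vanish, so $m_{j}\to\min_{|t|\le R}|p_{A}(it)|>0$; therefore a dovetailed search eventually certifies $m_{j}>0$ for some $j$. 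Fix such a $j$, put $\delta:=2^{-j}$, and let $\gamma_{1}$ be the counterclockwise boundary of the rectangle $[-R,-\delta]\times[-R,R]\subseteq\{\operatorname{Re}<0\}$. Since $m_{j}>0$, $p_{A}$ has no root in the strip $\{|\operatorname{Re}\xi|\le\delta,\ |\operatorname{Im}\xi|\le R\}$; combined with $|\lambda|<R-1$ for all eigenvalues, this forces every eigenvalue with $\operatorname{Re}\lambda<0$ to lie strictly inside the rectangle while no eigenvalue with $\operatorname{Re}\lambda>0$ lies in it, so $\gamma_{1}$ is a legitimate choice in the formula $P^{1}_{A}\mathbb{R}^{n}=\mathbb{E}^{s}_{A}$. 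Moreover $|p_{A}|\ge m_{j}>0$ on the right-hand edge of $\gamma_{1}$ (which sits in that strip) and $|p_{A}|\ge 1$ on the other three edges (each at distance $>1$ from every eigenvalue, since $|\lambda|<R-1$), so $|p_{A}|$ is bounded below by a computable positive constant along $\gamma_{1}$.

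Consequently $\xi\mapsto(\xi I-A)^{-1}$ is a computable matrix-valued function along the explicitly given piecewise-linear contour $\gamma_{1}$, with a computable bound there, so the contour integral $P^{1}_{A}=\frac{1}{2\pi i}\int_{\gamma_{1}}(\xi I-A)^{-1}\,d\xi$ is computable as a (real) $n\times n$ matrix, uniformly in a name of $A$. Finally, $P^{1}_{A}$ is a projection with range $\mathbb{E}^{s}_{A}$, hence $\mathbb{E}^{s}_{A}=\{y\in\mathbb{R}^{n} : (I-P^{1}_{A})y=0\}$. Both halves of recursiveness follow: rationally approximating the vectors $P^{1}_{A}q$, $q\in\mathbb{Q}^{n}$ (dense in the range by continuity), gives a computable sequence of rational points dense in $\mathbb{E}^{s}_{A}$, so $\mathbb{E}^{s}_{A}$ is r.e.\ closed; and $\mathbb{R}^{n}\setminus\mathbb{E}^{s}_{A}=\{y : \|(I-P^{1}_{A})y\|>0\}$ is the positivity set of the computable function $y\mapsto\|(I-P^{1}_{A})y\|$, hence r.e.\ open, so $\mathbb{E}^{s}_{A}$ is co-r.e.\ closed. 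Since every step is carried out uniformly from a name of $A$, the map $H^{s}$ is computable.

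I expect the main obstacle to be the second step. The non-constructivity of the classical argument is exactly the passage from ``no eigenvalue on the imaginary axis'' to an explicit numerical separation of the stable and unstable spectra, and turning this into a terminating procedure requires (a) recognizing that hyperbolicity makes a certifiably positive gap $\delta$ semi-decidable, and (b) arranging the search so that the \emph{same} certificate $m_{j}>0$ also produces the lower bound for $|p_{A}|$ along the resulting contour --- without which the resolvent, and hence the integral, would only be ``approximable in the limit'' rather than computable. Verifying that $P^{1}_{A}$ is genuinely a real matrix with range exactly $\mathbb{E}^{s}_{A}$, and that all estimates are uniform in $A$, is routine but must be handled with some care.
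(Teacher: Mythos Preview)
Your proposal is correct and follows essentially the same route as the paper: compute the Riesz projection $P^{1}_{A}$ via a contour integral of the resolvent, then read off $\mathbb{E}^{s}_{A}$ as the range of $P^{1}_{A}$ (r.e.\ closed) and the kernel of $I-P^{1}_{A}$ (co-r.e.\ closed). The only noteworthy difference is in how the separating contour is found: the paper first computes the eigenvalues of $A$ (as a multiset, from the characteristic polynomial) and places the rectangle from their $\rho$-names, while you instead run a search certifying that $|p_{A}|$ is bounded away from zero on a thin strip about the imaginary axis, without ever isolating individual roots; both terminate precisely because $A\in\mathfrak{A}_{H}$, and both deliver the lower bound on $|p_{A}|$ along $\gamma_{1}$ needed to make the resolvent and hence the integral computable.
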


\begin{proof}
First we observe that the map $A\mapsto$ the eigenvalues of $A$,
$A\in{\mathfrak{A}}_{H}$, is computable. Assume that $\lambda_{1}%
,\ldots,\lambda_{k},\mu_{k+1},\ldots,\mu_{n}$ (counting multiplicity) are
eigenvalues of $A$ with $Re(\lambda_{j})<0$ for $1\leq j\leq k$ and
$Re(\mu_{j})>0$ for $k+1\leq j\leq n$. Then from the $\rho$-names of the
eigenvalues, one can compute two rectangular closed curves $\gamma_{A}^{1}$
and $\gamma_{A}^{2}$, where $\gamma_{A}^{1}$ is in the left half of the
complex plane that surrounds all eigenvalues $\lambda_{j}$ with $1\leq j\leq
k$ and $\gamma_{A}^{2}$ is in the right half of the complex plane that
surrounds all eigenvalues $\mu_{j}$ with $k+1\leq j\leq n$, and both
$\gamma_{A}^{1}$ and $\gamma_{A}^{2}$ are oriented counterclockwise. From
$\gamma_{A}^{1}$ and $\gamma_{2}^{A}$ one can further computes the maps
$P_{A}^{1},P_{A}^{2}:{\mathbb{R}}^{n}\rightarrow{\mathbb{R}}^{n}$ , where
$P_{A}^{1}v=\frac{1}{2\pi i}\int_{\gamma_{A}^{1}}(\xi I-A)^{-1}vd\xi$ and
$P_{A}^{2}v=\frac{1}{2\pi i}\int_{\gamma_{A}^{2}}(\xi I-A)^{-1}vd\xi$. We note
that, on the one hand, $\mathbb{E}^{s}_{A}=(P^{2}_{A})^{-1}(\{0\})$, and on
the other hand, $\mathbb{E}^{s}_{A}=\overline{\mathbb{E}^{s}_{A}}%
=\overline{P^{1}_{A}\mathbb{R}^{n}}$, where $\overline{K}$ denotes the closure
of the set $K$. Then by Theorem 6.2.4 of \cite{Wei00}, $\mathbb{E}^{s}_{A}$ is
both r.e. and co-r.e. closed, thus computable. The same argument shows that
${\mathbb{E}}_{A}^{u}$ is computable from $A$.
\end{proof}

Next we present an effective version of the stable manifold theorem. Consider
the nonlinear system
\begin{equation}
\label{e0}\dot{x}=f(x(t))
\end{equation}
Assume that (\ref{e0}) defines a dynamical system, that is, the solution $x(t,
x_{0})$ to (\ref{e0}) with the initial condition $x(0)=x_{0}$ is defined for
all $t\in\mathbb{R}$. Since the system (\ref{e0}) is autonomous, if $p$ is a
hyperbolic equilibrium point, without loss of generality, we may assume that
$p$ is the origin $0$. \newline

\begin{theorem}
\label{t1} Let $f:\mathbb{R}^{n}\rightarrow\mathbb{R}^{n}$ be a $C^{1}%
$-computable function (meaning that both $f$ and $Df$ are computable). Assume
that the origin $0$ is a hyperbolic equilibrium point of (\ref{e0}) such that
$Df(0)$ has $k$ eigenvalues with negative real part and $n-k$ eigenvalues with
positive real part (counting multiplicity), $0<k\leq n$. Let $x(t,x_{0})$
denote the solution of (\ref{e0}) with the initial value $x_{0}$ at $t=0$.
Then there is a (Turing) algorithm that computes a $k$-dimensional manifold
$S\subset{\mathbb{R}}^{n}$ containing $0$ such that

\begin{itemize}
\item[(i)] For all $x_{0}\in S$, $\lim_{t\rightarrow+\infty}x(t,x_{0})=0$;

\item[(ii)] There are three positive rational numbers $\gamma$, $\epsilon$,
and $\delta$ such that $|x(t,x_{0})|\leq\gamma2^{-\epsilon t}$ for all
$t\geq0$ whenever $x_{0}\in S$ and $|x_{0}|\leq\delta$.
\end{itemize}

Moreover, if $k<n$, then a rational number $\eta$ and a ball $D$ with center
at the origin can be computed from $f$ such that for any solution $x(t,x_{0})$
to the equation (\ref{e0}) with $x_{0}\in D\setminus S$, $\{ x(t, x_{0}):
\ t\geq0\}\not \subset B(0, \eta)$ no matter how close the initial value
$x_{0}$ is to the origin.
\end{theorem}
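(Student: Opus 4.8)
The plan is to build the local stable manifold $S$ explicitly as a graph over the (computable) stable subspace $\mathbb{E}^s_A$, where $A = Df(0)$, using a Picard-type integral equation whose solution we can approximate uniformly. First I would invoke Theorem \ref{t0} to compute $A$, the spectral projections $P^1_A, P^2_A$, and hence bases (or at least $\rho$-names of a spanning set) for $\mathbb{E}^s_A$ and $\mathbb{E}^u_A$; from the computed rectangular curves $\gamma^1_A, \gamma^2_A$ I can also extract a rational $\alpha > 0$ with $\mathrm{Re}(\lambda) < -\alpha$ for the stable eigenvalues and $\mathrm{Re}(\mu) > \alpha$ for the unstable ones, together with a rational bound $C$ on the operator norms of the restricted semigroups $e^{At}P^1_A$ (for $t \geq 0$) and $e^{At}P^2_A$ (for $t \leq 0$). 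These constants are computable because the contour integrals defining $e^{At}P^i_A$ are computable in $t$ and $A$, and the decay rate is read off from the curves. Writing $g(x) = f(x) - Ax$, computability of $f$ and $Df$ gives that $g$ is computable with $g(0)=0$, $Dg(0)=0$; uniform continuity of $Dg$ on a computable closed ball lets me compute a rational radius $\delta_0$ on which the Lipschitz constant of $g$ is as small as we like, say $\leq \kappa$ with $2C^2\kappa/\alpha < 1/2$.

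Next I would set up the standard fixed-point formulation: a trajectory in the stable manifold with stable-component initial data $a \in \mathbb{E}^s_A$, $|a|$ small, solves
\[
u(t) = e^{At}a + \int_0^t e^{A(t-s)}P^1_A\, g(u(s))\,ds - \int_t^\infty e^{A(t-s)}P^2_A\, g(u(s))\,ds .
\]
On the Banach space of bounded continuous $u:[0,\infty)\to\mathbb{R}^n$ with $\sup_t |u(t)| e^{\alpha t/2} < \infty$, the right-hand side is a contraction (by the choice of $\kappa$), so Picard iteration $u_0 = e^{At}a$, $u_{m+1} = T(u_m)$ converges geometrically and — crucially — each iterate is computable from $a$ and $f$, uniformly in $a$, because it is built from computable operations (the semigroup, $g$, and convergent improper integrals with computable tails bounded by $\gamma_0 2^{-\epsilon s}$). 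The value $h(a) := P^2_A\, u(0) = -\int_0^\infty e^{-As}P^2_A g(u(s))\,ds \in \mathbb{E}^u_A$ is therefore a computable function of $a$, and $S$ is (locally) the graph $\{a + h(a) : a \in \mathbb{E}^s_A, |a| \leq \delta\}$ for a suitable computable rational $\delta \leq \delta_0$. This graph is a computable compact set: its rational-point density follows from evaluating $h$ on a rational net of the $k$-dimensional disc, and being the image of a computable function on a computable compact domain. Items (i) and (ii) come for free from the fixed-point space: the solution $x(t,x_0)$ for $x_0 \in S$ coincides with $u(t)$ and satisfies $|u(t)| \leq 2|a| e^{-\alpha t/2}$, giving (ii) with $\gamma, \epsilon, \delta$ rational and computable, and (i) as its immediate consequence; invariance $\phi_t(S) \subseteq S$ follows by uniqueness of solutions.

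For the final clause (the case $k < n$), the idea is a computable $\lambda$-lemma / local-instability estimate. Since $k < n$ there is at least one unstable direction; working in the computed coordinates, for $x_0 \in D \setminus S$ the unstable component $P^2_A x_0 \neq 0$ has some size that I can \emph{lower-bound} — here is the subtlety — so let me instead argue by contradiction within a computed ball $B(0,\eta)$. Pick a rational $\eta$ small enough that on $B(0,\eta)$ the nonlinearity $g$ is dominated by the linear instability (Lipschitz constant $\leq \kappa$ as above); a Gronwall-type estimate shows that if a full forward orbit stays in $B(0,\eta)$ then $x_0$ must lie on the local stable manifold, i.e. $x_0 \in S$. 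Contrapositively, $x_0 \in D \setminus S$ forces the orbit to leave $B(0,\eta)$, and this holds for $x_0$ arbitrarily close to $0$ since $S$ has positive codimension and $D \setminus S$ accumulates at the origin. Both $\eta$ and $D$ (take $D = B(0,\delta)$) are computed from $f$ via the constants above. The main obstacle I anticipate is the \emph{uniform} computability of the improper integral defining $h(a)$ and of the Picard iterates: one must produce, computably from $f$ alone, an explicit modulus — a rational $\gamma_0$ and $\epsilon_0$ with $|e^{A(t-s)}P^2_A g(u(s))| \leq \gamma_0 2^{-\epsilon_0 s}$ valid simultaneously for all $|a| \leq \delta$ and all iterates — so that the tail of the integral is effectively negligible and the whole construction runs as a single Type-2 algorithm; everything else is bookkeeping on top of Theorem \ref{t0} and the contraction mapping principle.
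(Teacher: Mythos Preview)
Your proposal is correct and follows essentially the same route as the paper: the Perron-type integral equation, effective Picard iteration via explicit contraction estimates derived from the resolvent-contour bounds on $e^{At}P^i_A$, the graph representation $S=\{a+h(a):a\in\mathbb{E}^s_A,\ |a|\le\delta\}$, and the contrapositive ``bounded forward orbit $\Rightarrow x_0\in S$'' argument for the final clause are exactly the paper's Claims 1--3 and its closing paragraph. One caution: drop the aside about extracting \emph{bases} for $\mathbb{E}^s_A$ and $\mathbb{E}^u_A$ --- the paper emphasizes that passing from eigenvalues to eigenvectors is precisely the non-computable step the analytic (resolvent-integral) approach is designed to circumvent, and indeed your argument never actually uses bases, only the computable projections $P^i_A$.
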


\begin{proof}
First we note that under the assumption that $f$ is $C^{1}$-computable, the
solution map $x:\mathbb{R}\times\mathbb{R}^{n}\rightarrow\mathbb{R}^{n}$,
$(t,a)\mapsto x(t,a)$, is computable (\cite{GZB07}).

Let $f(x)=Ax+F(x)$, where $A=Df(0)$ and $F(x)=f(x)-Ax$. Then the equation
(\ref{e0}) can be written in the form of
\begin{equation}
\dot{x}=Ax+F(x) \label{e2}%
\end{equation}
The first step in constructing the desired algorithm is to break the flow
$e^{At}$ governed by the linear equation $\dot{x}=Ax$ into the stable and
unstable components, denoted as $I_{\Gamma_{1}}(t)$ and $I_{\Gamma_{2}}(t)$
respectively. By making use of an integral formula, we are able to show that
the breaking process is computable from $A$. The details for the first step:
Since $F(x)=f(x)-Ax$, it follows that $F(0)=0$, $DF(0)=0$, $F$ and $DF$ are
both computable because $f$ and $Df$ are computable functions by assumption.
Thus there is a computable modulus of continuity $d:\mathbb{N}\rightarrow
\mathbb{N}$ such that
\begin{equation}
|F(x)-F(y)|\leq2^{-m}|x-y|\ \ \mbox{whenever}\ \ |x|\leq2^{-d(m)}%
\ \&\ |y|\leq2^{-d(m)} \label{ine-0}%
\end{equation}
Since $Df$ is computable, all entries in the matrix $A$ are computable;
consequently, the coefficients of the characteristic polynomial
$\mbox{det}(A-\lambda I_{n})$ of $A$ are computable, where
$\mbox{det}(A-\lambda I_{n})$ denotes the determinant of $A-\lambda I_{n}$ and
$I_{n}$ is the $n\times n$ unit matrix. Thus all eigenvalues of $A$ are
computable, for they are zeros of the computable polynomial
$\mbox{det}(A-\lambda I_{n})$. Assume that $\lambda_{1},\ldots,\lambda_{k}%
,\mu_{k+1},\ldots,\mu_{n}$ (counting multiplicity) are eigenvalues of $A$ with
$Re(\lambda_{j})<0$ for $1\leq j\leq k$ and $Re(\mu_{j})>0$ for $k+1\leq j\leq
n$, where $Re(z)$ denotes the real part of the complex number $z$. Then two
rational numbers $\sigma>0$ and $\alpha>0$ can be computed from the
eigenvalues of $A$ such that $Re(\lambda_{j})<-(\alpha+\sigma)$ for $1\leq
j\leq k$ and $Re(\mu_{j})>\sigma$ for $k+1\leq j\leq n$. We break $\alpha$
into two parts for later use: Let $\alpha_{1}$ and $\alpha_{2}$ be two
rational numbers such that
\begin{equation}
0<\alpha_{1}<\alpha\text{ \ \ \ and \ \ \ }\alpha_{1}+\alpha_{2}=\alpha.
\label{Eq:alphas}%
\end{equation}

Let $M$ be a natural number such that $M>\max\{\alpha+\sigma,1\}$ and
$\max\{|\lambda_{j}|,|\mu_{l}|:1\leq j\leq k,k+1\leq l\leq n\}\leq M-1$. We
now construct two simple piecewise-smooth close curves $\Gamma_{1}$ and
$\Gamma_{2}$ in $\mathbb{R}^{2}$: $\Gamma_{1}$ is the boundary of the
rectangle with the vertices $(-\alpha-\sigma,M)$, $(-M,M)$, $(-M,-M)$, and
$(-\alpha-\sigma,-M)$, while $\Gamma_{2}$ is the boundary of the rectangle
with the vertices $(\sigma,M)$, $(M,M)$, $(M,-M)$, and $(\sigma,-M)$. Then
$\Gamma_{1}$ with positive direction (counterclockwise) encloses in its
interior all the $\lambda_{j}$ for $1\leq j\leq k$ and $\Gamma_{2}$ with
positive direction encloses all the $\mu_{j}$ for $k+1\leq j\leq n$ in its
interior. We observe that for any $\xi\in\Gamma_{1}\bigcup\Gamma_{2}$, the
matrix $A-\xi I_{n}$ is invertible. Since the function $g:\Gamma_{1}%
\bigcup\Gamma_{2}\rightarrow\mathbb{R}$, $g(\xi)=||(A-\xi I_{n})^{-1}||$, is
computable (see for example \cite{Zho09}), where $(A-\xi I_{n})^{-1}$ is the
inverse of the matrix $A-\xi I_{n}$, the maximum of $g$ on $\Gamma_{1}%
\bigcup\Gamma_{2}$ is computable. Let $K_{1}\in\mathbb{N}$ be an upper bound
of this computable maximum. Now for any $t\in\mathbb{R}$, from (5.47) of
\cite{Kat95},
\begin{align}
e^{At}  &  =-\frac{1}{2\pi i}\int_{\Gamma_{1}}e^{\xi t}(A-\xi I_{n})^{-1}%
d\xi-\frac{1}{2\pi i}\int_{\Gamma_{2}}e^{\xi t}(A-\xi I_{n})^{-1}%
d\xi\label{e-lin}\\
&  =I_{\Gamma_{1}}(t)+I_{\Gamma_{2}}(t)\nonumber
\end{align}
We recall that $e^{At}$ is the solution to the linear equation $\dot{x}=Ax$.
Since $A$ is computable and integration is a computable operator, it follows
that $I_{\Gamma_{1}}$ and $I_{\Gamma_{2}}$ are computable. A simple
calculation shows that $||-\frac{1}{2\pi i}\int_{\Gamma_{1}}e^{t\xi}(A-\xi
I_{n})^{-1}d\xi||\leq4K_{1}Me^{-(\alpha+\sigma)t}/\pi$ for $t\geq0$ and
$||-\frac{1}{2\pi i}\int_{\Gamma_{2}}e^{t\xi}(A-\xi I_{n})^{-1}d\xi
||\leq4K_{1}Me^{\sigma t}/\pi$ for $t\leq0$. Let
\begin{equation}
K=4MK_{1} \label{eK}%
\end{equation}
Then
\begin{equation}
\mbox{$||I_{\Gamma_1}(t)||\leq Ke^{-(\alpha +\sigma) t}$ for
$t\geq 0$ \ and \ $||I_{\Gamma_2}(t)||\leq Ke^{\sigma t}$ for
$t\leq 0$} \label{eI}%
\end{equation}
The two estimates in (\ref{eI}) show that $I_{\Gamma_{1}}(t)$ and
$I_{\Gamma_{2}}(t)$ are stable component and unstable component of $e^{A}$.
The first step is now complete.

The second step in the construction is to compute a ball, $B(0, r)$,
surrounding the hyperbolic equilibrium point 0 such that $B(0, r)$ contains a
set of potential solutions $x(t, x_{0})$ to (\ref{e2}) satisfying the
conditions (i) and (ii) described in Theorem \ref{t1}. The details for step 2:
\newline

\noindent\textbf{Claim 1.} Consider the integral equation
\begin{equation}
u(t,a)=I_{\Gamma_{1}}(t)a+\int_{0}^{t}I_{\Gamma_{1}}(t-s)F(u(s,a))ds-\int
_{t}^{\infty}I_{\Gamma_{2}}(t-s)F(u(s,a))ds \label{e3}%
\end{equation}
where the constant vector $a$ is a parameter. If $u(t,a)$ is a continuous
solution to the integral equation, then it satisfies the differential equation
(\ref{e2}) with the initial condition $u(0,a)=I_{\Gamma_{1}}(0)a-\int
_{0}^{\infty}I_{\Gamma_{2}}(-s)F(u(s,a))ds$. \newline

\noindent\textbf{Proof of Claim 1.} See Appendix 1. \newline

We observe that the solution to the integral equation is the fixed point of
the operator defined on the right hand side of the equation (\ref{e3}). Next
we compute an integer $m_{0}$ such that $2^{-m_{0}}\leq\frac{\sigma}{4K}$, and
then set
\begin{equation}
r=2^{-d(m_{0})}/2K,\qquad B=B(0,r)=\{a\in\mathbb{R}^{n}:|a|<r\} \label{er}%
\end{equation}
where the computable function $d$ is as in (\ref{ine-0}). \newline

\noindent\textbf{Claim 2.} For any $a\in B$ and $t\geq0$, define the
successive approximations as follows:
\begin{align}
\label{e4}u^{(0)}(t, a)  &  = 0\\
u^{(j)}(t, a)  &  = I_{\Gamma_{1}}(t)a+\int_{0}^{t}I_{\Gamma_{1}%
}(t-s)F(u^{(j-1)}(s, a))ds\nonumber\\
&  -\int_{t}^{\infty}I_{\Gamma_{2}}(t-s)F(u^{(j-1)}(s,a))ds, \ \ j\geq
1\nonumber
\end{align}
then the following three inequalities hold for all $j\in\mathbb{N}$:
\begin{equation}
\label{ine-1}|u^{(j)}(t,a)-u^{(j-1)}(t,a)|\leq K|a|e^{-\alpha_{1}t}/2^{j-1}%
\end{equation}
\begin{equation}
\label{ine-2}|u^{(j)}(t, a)|\leq2^{-d(m_{0})}e^{-\alpha_{1}t}%
\end{equation}
and for any $\tilde{a}\in B$,
\begin{equation}
\label{ine-3}|u^{(j)}(t, a)-u^{(j)}(t, \tilde{a})|\leq3K|a-\tilde{a}|\\
\end{equation}

\noindent\textbf{Proof of Claim 2.} See Appendix 2. \newline

It then follows from (\ref{e4}) and (\ref{ine-1}) that $\{u^{(j)}%
(t,a)\}_{j=1}^{\infty}$ is a computable Cauchy sequence effectively convergent
to the solution $u(t,a)$ of the integral equation (\ref{e3}), uniformly in
$t\geq0$ and $a\in B$. Consequently the solution, $t,a\mapsto u(t,a)$, is
computable. Furthermore, (\ref{ine-2}) and (\ref{ine-3}) imply that for all
$t\geq0$ and $a,\tilde{a}\in B$,
\begin{equation}
|u(t,a)|\leq2^{-d(m_{0})}e^{-\alpha_{1}t},\qquad|u(t,a)-u(t,\tilde
{a}|<3K|a-\tilde{a}| \label{ine-2-3-c}%
\end{equation}
Thus $\lim_{t\rightarrow\infty}u(t,a)=0\quad\mbox{for all $a\in
B$}$. Moreover, the first inequality in (\ref{ine-2-3-c}) shows that $u(t, a)$
satisfies condition (ii) of Theorem \ref{t1} with $\gamma=2^{-d(m_{0})}$,
$\epsilon=\alpha_{1}$, and $\delta=2^{-d(m_{0})}/2K$.\newline

Although Claim 2 shows that for any $a\in B$, the integral equation (\ref{e3})
has a computable solution $u(t, a)$ and this solution satisfies the conditions
(i) and (ii) of Theorem \ref{t1}, we may not take $B$ as a desired stable
manifold $S$ because if $u(0, a)\neq a$, then $u(t, a)$ is not a solution to
the equation (\ref{e0}) with the initial value $a$ at $t=0$. Nevertheless, the
set $B$ provides a pool of potential solutions to (\ref{e0}) on a stable
manifold. This leads us to the next step of the proof. \newline

The last step in the construction of the desired algorithm is to extract a
$k$-dimensional manifold $S$ from $B$, using a computable process, such that
for any $a\in S$, $u(0, a)=a$. Then if we set $x(t, a)=u(t,a)$ for $a\in S$,
by Claims 1 and 2, $x(t, a)$ is the solution to the differential equation
(\ref{e2}) with the initial value $a$ at $t=0$ and satisfies the conditions
(i) and (ii) of Theorem \ref{t1}. Now for the details.

Let $P_{1}=I_{\Gamma_{1}}(0)=-\frac{1}{2\pi i}\int_{\Gamma_{1}}(A-\xi
I_{n})^{-1}d\xi$ and $P_{2}=I_{\Gamma_{2}}(0)=-\frac{1}{2\pi i}\int
_{\Gamma_{2}}(A-\xi I_{n})^{-1}d\xi$. Then $P_{1}\mathbb{R}^{n}={\mathbb{E}%
}_{A}^{s}$, $P_{2}\mathbb{R}^{n}={\mathbb{E}}_{A}^{u}$, $\mathbb{R}%
^{n}={\mathbb{E}}_{A}^{s}\bigoplus{\mathbb{E}}_{A}^{u}$ with
$\mbox{dim}{\mathbb{E}}_{A}^{s}=k$ and $\mbox{dim}{\mathbb{E}}_{A}^{u}=n-k$,
$P_{j}P_{k}=\delta_{jk}P_{j}$ ($\delta_{jk}=1$ if $j=k$ and $\delta_{jk}=0$ if
$j\neq k$), and $P_{1}+P_{2}=I$ is the identity map on $\mathbb{R}^{n}$
(\emph{c.f.} $\S 1.5.3$ and $\S 1.5.4$, \cite{Kat95}; $\S 4.6$ of
\cite{Rob95}). Since $A$ is computable, so are $P_{1}$ and $P_{2}$. Moreover,
$I_{\Gamma_{1}}(t)P_{2}=0$ for any $t\in\mathbb{R}$ as the following
calculation shows: Let $R(\xi)$ denote $(A-\xi I_{n})^{-1}$. Then we have%
\begin{align*}
R(\xi_{1})-R(\xi_{2})  &  =R(\xi_{1})(A-\xi_{2}I_{n})R(\xi_{2})-R(\xi
_{1})(A-\xi_{1}I_{n})R(\xi_{2})\\
&  =R(\xi_{1})[(A-\xi_{2}I_{n})-(A-\xi_{1}I_{n})]R(\xi_{2})\\
&  =R(\xi_{1})(\xi_{1}-\xi_{2})I_{n}R(\xi_{2})\\
&  =(\xi_{1}-\xi_{2})R(\xi_{1})R(\xi_{2}).
\end{align*}
Using the last equation we obtain that for any $v\in\mathbb{R}^{n}$,
\begin{align}
I_{\Gamma_{1}}(t)P_{2}v  &  =\left(  \frac{1}{2\pi i}\right)  ^{2}\int
_{\Gamma_{1}}e^{\xi t}R(\xi)d\xi\int_{\Gamma_{2}}R(\xi^{\prime})vd\xi^{\prime
}\label{e5}\\
&  =\left(  \frac{1}{2\pi i}\right)  ^{2}\int_{\Gamma_{1}}\int_{\Gamma_{2}%
}e^{\xi t}R(\xi)R(\xi^{\prime})vd\xi d\xi^{\prime}\nonumber\\
&  =\left(  \frac{1}{2\pi i}\right)  ^{2}\int_{\Gamma_{1}}\int_{\Gamma_{2}%
}e^{\xi t}\frac{R(\xi)-R(\xi^{\prime})}{\xi-\xi^{\prime}}vd\xi d\xi^{\prime
}\nonumber\\
&  =\left(  \frac{1}{2\pi i}\right)  ^{2}\int_{\Gamma_{1}}e^{\xi t}\left(
\int_{\Gamma_{2}}\frac{R(\xi)}{\xi-\xi^{\prime}}d\xi^{\prime}-\int_{\Gamma
_{2}}\frac{R(\xi^{\prime})}{\xi-\xi^{\prime}}d\xi^{\prime}\right)
vd\xi\nonumber\\
&  =\left(  \frac{1}{2\pi i}\right)  ^{2}\int_{\Gamma_{1}}e^{\xi t}\left(
-\int_{\Gamma_{2}}\frac{R(\xi^{\prime})}{\xi-\xi^{\prime}}d\xi^{\prime
}\right)  vd\xi\nonumber\\
&  =\left(  \frac{1}{2\pi i}\right)  ^{2}\int_{\Gamma_{2}}R(\xi^{\prime
})\left(  \int_{\Gamma_{1}}\frac{e^{\xi t}}{\xi^{\prime}-\xi}d\xi\right)
vd\xi^{\prime}=0\nonumber
\end{align}
A similar computation shows that for any $t\in\mathbb{R}$,
\begin{equation}
I_{\Gamma_{2}}(t)P_{1}v=0 \quad\mbox{and} \quad P_{2}I_{\Gamma_{2}%
}(t)v=I_{\Gamma_{2}}(t)v, \quad v\in\mathbb{R}^{n} \label{e6}%
\end{equation}
Now let us use these results to compute the projections of $u(0,a)$ in
${\mathbb{E}}_{A}^{s}$ and ${\mathbb{E}}_{A}^{u}$: for any $a\in\mathbb{R}%
^{n}$,
\begin{align}
P_{1}u(0,a)  &  =P_{1}\left(  I_{\Gamma_{1}}(0)a-\int_{0}^{\infty}%
I_{\Gamma_{2}}(-s)F(u(s,a))ds\right) \nonumber\\
&  =P_{1}P_{1}a-\int_{0}^{\infty}P_{1}I_{\Gamma_{2}}(-s)F(u(s,a))ds\nonumber\\
&  =P_{1}a \label{Eq:P1_a}%
\end{align}
and
\begin{align*}
P_{2}u(0,a)  &  =P_{2}\left(  I_{\Gamma_{1}}(0)a-\int_{0}^{\infty}%
I_{\Gamma_{2}}(-s)F(u(s,a))ds\right) \\
&  =P_{2}P_{1}a-P_{2}\left(  \int_{0}^{\infty}I_{\Gamma_{2}}%
(-s)F(u(s,a))ds\right) \\
&  =-\int_{0}^{\infty}I_{\Gamma_{2}}(-s)F(u(s,a))ds
\end{align*}
We note that for any $a\in\mathbb{R}^{n}$,
\begin{align*}
I_{\Gamma_{1}}(t)a  &  =I_{\Gamma_{1}}(t)(P_{1}a+P_{2}a)\\
&  =I_{\Gamma_{1}}(t)P_{1}a+I_{\Gamma_{1}}(t)P_{2}a\\
&  =I_{\Gamma_{1}}(t)P_{1}a
\end{align*}
which implies that if the solution $u(t,a)$ of the integral equation
(\ref{e3}) is constructed by successive approximations (\ref{e4}), then
$P_{2}a$ does not enter the computation for $u(t,a)$ and thus may be taken as
zero. Therefore the projection of $u(0,a)$ in ${\mathbb{E}}_{A}^{u}$ satisfies
the equation
\begin{equation}
\label{e-P2}P_{2}u(0,a)=-\int_{0}^{\infty}I_{\Gamma_{2}}(-s)F(u(s,P_{1}a))ds
\end{equation}
Next we define a map $\phi_{A}:{\mathbb{E}}_{A}^{s}(r)\rightarrow{\mathbb{E}%
}_{A}^{u}$, $b\mapsto-\int_{0}^{\infty}I_{\Gamma_{2}}(-s)F(u(s,b))ds$ for
$b\in{\mathbb{E}}_{A}^{s}(r)$, where $r$ is the rational number defined in
(\ref{er}) and ${\mathbb{E}}_{A}^{s}(r)=\{b\in{\mathbb{E}}_{A}^{s}:|b|\leq
r/2\}$. We observe that the compact set ${\mathbb{E}}_{A}^{s}(r)$ is
computable since the closed set ${\mathbb{E}}_{A}^{s}$ is computable (proved
in Theorem \ref{t0}).
Obviously the map $\phi_{A}$ is computable. By Theorem 6.2.4 \cite{Wei00} the
compact set $\phi_{A}[{\mathbb{E}}_{A}^{s}(r)]$ is computable.

Now we are ready to define $S$:
\[
S=\{b+\phi_{A}(b):b\in{\mathbb{E}}_{A}^{s}(r)\}
\]
Then $S$ is a manifold of dimension $k$. For any $a\in S$, $a=b+\phi_{A}(b)$
for some $b\in\mathbb{E}_{A}^{s}(r)$. Since ${\mathbb{R}}^{n}={\mathbb{E}}%
_{A}^{s}\bigoplus{\mathbb{E}}_{A}^{u}$, $b\in{\mathbb{E}}_{A}^{s}$ and
$\phi_{A}(b)\in{\mathbb{E}}_{A}^{u}$, it follows that
\begin{equation}
\label{e-S}b=P_{1}a \quad\mbox{and} \quad\phi_{A}(b)=\phi_{A}(P_{1}a)=P_{2}a
\end{equation}
Combining (\ref{Eq:P1_a}), (\ref{e-P2}), and (\ref{e-S}) we obtain that for
any $a\in S$, $u(0,a)=P_{1}u(0,a)+P_{2}u(0,a)=P_{1}a+\phi_{A}(P_{1}%
a)=P_{1}a+P_{2}a=a$. Thus for any $a\in S$, $u(0,a)=a$; that is, $a$ is the
initial condition of $u(t,a)$ at $t=0$. \newline

\noindent\textbf{Claim 3.} $S$ is a computable closed subset of ${\mathbb{R}%
}^{n}$. \newline

\noindent\textbf{Proof.} Since ${\mathbb{E}}_{A}^{s}(r)$ is computable, there
is a computable sequence $\{b_{j}\}\subseteq{\mathbb{E}}_{A}^{s}(r)$ that is
effectively dense in ${\mathbb{E}}_{A}^{s}(r)$; that is, there is a computable
function $\psi:{\mathbb{N}}\rightarrow{\mathbb{N}}$ such that ${\mathbb{E}%
}_{A}^{s}(r)\subseteq\bigcup_{j=1}^{\psi(k)}B(b_{j},2^{-k})$ for all
$k\in{\mathbb{N}}$ (\emph{c.f.} \cite{Zho96}). The following estimate shows
that the computable sequence $\{b_{j}+\phi_{A}(b_{j})\}$ is effectively dense
in $S$, thus the closed manifold $S$ is computable. For any $b+\phi
_{A}(b),\tilde{b}+\phi_{A}(\tilde{b})\in S$, we have
\begin{align*}
&  |b+\phi_{A}(b)-(\tilde{b}+\phi_{A}(\tilde{b}))|\\
&  \leq|b-\tilde{b}|+|\phi_{A}(b)-\phi_{A}(\tilde{b})|\\
&  =|b-\tilde{b}|+\left\vert -\int_{0}^{\infty}I_{\Gamma_{2}}%
(-s)F(u(s,b))ds+\int_{0}^{\infty}I_{\Gamma_{2}}(-s)F(u(s,\tilde{b}%
))ds\right\vert \\
&  \leq|b-\tilde{b}|+\int_{0}^{\infty}||I_{\Gamma_{2}}(-s)||\cdot
|F(u(s,b))-F(u(s,\tilde{b}))|ds\\
&  \leq|b-\tilde{b}|+2^{-m_{0}}|u(s,b)-u(s,\tilde{b})|\int_{0}^{\infty
}Ke^{-\sigma s}ds\\
&  =|b-\tilde{b}|+\frac{K}{\sigma2^{m_{0}}}|u(s,b)-u(s,\tilde{b})|\\
&  =|b-\tilde{b}|+\frac{K}{\sigma2^{m_{0}}}\cdot3K|b-\tilde{b}|
\end{align*}
The estimates (\ref{eI}), (\ref{ine-0}), (\ref{er}), and (\ref{ine-3}) are
used in the above calculation. The proof of claim 3 is complete.

For every $a\in S$, set $x(t, a)=u(t,a)$; then by Claims 1, 2, and 3, $x(t,a)$
is the solution to the differential equation (\ref{e2}) with initial value $a$
at $t=0$ and $x(t, a)$ satisfies the conditions $(i)$ and $(ii)$ of Theorem
\ref{t1}.

Finally we show that if $x(t,x_{0})$ is a solution to the differential
equation (\ref{e2}) satisfying $0<|x_{0}|<2^{-d(m_{0})}/4K^{2}$ but
$x_{0}\not \in S$, then there exists $t^{\prime}>0$ such that $|x(t^{\prime
},x_{0})|> 2^{-d(m_{0})}$. This proves the last part of the theorem if we set
$\eta=2^{-d(m_{0})}$ and $D=\{ x\in\mathbb{R}^{n}: |x|<2^{-d(m_{0})}/4K^{2}%
\}$. Indeed, if otherwise $|x(t,x_{0})|\leq2^{-d(m_{0})}$ for all $t\geq0$. We
show in the following that this condition implies $x_{0}\in S$, which is a contradiction.

Since $x(t,x_{0})$ is the solution to $\dot{x}=Ax+F(x)$ with the initial value
$x_{0}$, it follows that (\emph{c.f.} Theorem 4.8.2 \cite{Rob95})
\[
x(t,x_{0})=e^{At}x_{0}+\int_{0}^{t}e^{(t-s)A}F(x(s,x_{0}))ds
\]
which, using (\ref{e-lin}), can be rewritten as
\begin{align}
\label{e-rep} &  x(t,x_{0})\\
&  = I_{\Gamma_{1}}(t)x_{0}+I_{\Gamma_{2}}(t)x_{0}+\int_{0}^{t}I_{\Gamma_{1}%
}(t-s)F(x(s,x_{0}))ds+\int_{0}^{t}I_{\Gamma_{2}}(t-s)F(x(s,x_{0}%
))ds\nonumber\\
&  = I_{\Gamma_{1}}(t)x_{0}+\int_{0}^{t}I_{\Gamma_{1}}(t-s)F(x(s,x_{0}%
))ds-\int_{t}^{\infty}I_{\Gamma_{2}}(t-s)F(x(s,x_{0}))ds+I_{\Gamma_{2}%
}(t)b\nonumber
\end{align}
where $b=x_{0}+\int_{0}^{\infty}I_{\Gamma_{2}}(-s)F(x(s,x_{0}))ds$
(\emph{c.f.} \S 1.5.3 \cite{Kat95}). Note that $b$ is well defined since we
assume that $|x(t,x_{0})|\leq2^{-d(m_{0})}$ for all $t\geq0$, then, from
(\ref{ine-0}), $|F(x(t,x_{0}))|$ is bounded for all $t\geq0$; consequently,
by\ (\ref{eI}), the integral $\int_{0}^{\infty}I_{\Gamma_{2}}(-s)F(x(s,x_{0}%
))ds$ converges. We also note that the first three terms in the above
representation for $x(t,x_{0})$ are bounded. Moreover, $I_{\Gamma_{2}%
}(t)b=I_{\Gamma_{2}}(t)P_{1}b+I_{\Gamma_{2}}(t)P_{2}b=I_{\Gamma_{2}}(t)P_{2}b$
since $I_{\Gamma_{2}}(t)P_{1}b=0$ by (\ref{e6}). We claim that if
$P_{2}b\not \equiv 0$, then $I_{\Gamma_{2}}(t)b$ is unbounded as
$t\rightarrow\infty$. We make use of the residue formula to prove the claim.
Recall that $\Gamma_{2}$ is a closed curve in the right-hand side of the
complex plane with counterclockwise orientation that contains in its interior
$\mu_{j}$, $k+1\leq j\leq n$, where $\mu_{j}$ are the eigenvalues of $A$ with
$Re(\mu_{j})>0$, which are the exact singularities of $R(\xi)=(A-\xi
I_{n})^{-1}$ in the right complex plane. Then by the residue formula,
\begin{equation}
\label{e-res}\frac{1}{2\pi i}\int_{\Gamma_{2}}e^{t\xi}R(\xi)d\xi=\sum
_{l=k+1}^{n}e^{\mu_{j}t}\mbox{res}_{\mu_{j}}R
\end{equation}
where $\mbox{res}_{\mu_{j}}R$ is the residue of $R$ at $\mu_{j}$. Since
$Re(\mu_{j})>0$, if $P_{2}b\not \equiv 0$, then
\begin{align*}
&  I_{\Gamma_{2}}(t)b = I_{\Gamma_{2}}(t)P_{2}b\\
&  =-\frac{1}{2\pi i}\int_{\Gamma_{2}}e^{t\xi}(A-\xi I_{n})^{-1}P_{2}bd\xi\\
&  =-\sum_{l=k+1}^{n}e^{\mu_{j}t}res_{\mu_{j}}RP_{2}b
\end{align*}
is unbounded as $t\to\infty$. This is however impossible because the first
three terms in (\ref{e-rep}) are bounded and we have assumed that
$|x(t,x_{0})|\leq2^{-d(m_{0})}$ for all $t\geq0$. Therefore, $P_{2}b\equiv0$;
consequently, $I_{\Gamma_{2}}(t)b=I_{\Gamma_{2}}(t)P_{2}b=0$. This last
equation together with (\ref{e-rep}) shows that $x(t,x_{0})$ satisfies the
integral equation (\ref{e3}). Now let $x^{\prime}(t, x_{0})$ be the solution
to the integral equation (\ref{e3}) with parameter $x_{0}$ and constructed by
the successive approximations (\ref{e4}). Then
\[
x^{\prime}(t, x_{0})=I_{\Gamma_{1}}(t)x_{0}+\int_{0}^{t}I_{\Gamma_{1}%
}(t-s)F(x^{\prime}(s, x_{0}))ds-\int_{t}^{\infty}I_{\Gamma_{2}}%
(t-s)F(x^{\prime}(s, P_{1}x_{0}))ds
\]
By the uniqueness of the solution, $x(t, x_{0})=x^{\prime}(t, x_{0})$; in
particular, $x_{0}=x(0, x_{0})=x^{\prime}(0, x_{0})=P_{1}x_{0}-\int
_{0}^{\infty}I_{\Gamma_{2}}(-s)F(x^{\prime}(s, P_{1}x_{0}))ds$. Since
$P_{1}x_{0}\in\mathbb{E}^{s}_{A}$ and $\norm{P_1x_0}\leq
\norm{P_1}\norm{x_0}\leq K\cdot2^{d(m_{0})}/4K^{2}=r/2$ (recall that
$\norm{P_1}\leq K$ by (\ref{eI})), it follows that $P_{1}x_{0}\in
\mathbb{E}^{s}_{A}(r)$, which further implies that $x_{0}=P_{1}x_{0}-\int
_{0}^{\infty}I_{\Gamma_{2}}(-s)F(x^{\prime}(s, P_{1}x_{0}))ds=P_{1}x_{0}%
+\phi_{A}(P_{1}x_{0})\in S$. This contradicts the fact that $x_{0}$ is not on
$S$. The proof is complete.
\end{proof}

\begin{theorem}
Let $f:\mathbb{R}^{n}\rightarrow\mathbb{R}^{n}$ be a $C^{1}$-computable
function. Assume that the origin $0$ is a hyperbolic equilibrium point of
(\ref{e0}) such that $Df(0)$ has $k$ eigenvalues with negative real part and
$n-k$ eigenvalues with positive real part (counting multiplicity), $0\leq
k<n$. Let $x(t,x_{0})$ denote the solution to (\ref{e0}) with the initial
value $x_{0}$ at $t=0$. Then there is a (Turing) algorithm that computes a
$(n-k)$-dimensional manifold $U\subset{\mathbb{R}}^{n}$ containing $0$ such that

\begin{enumerate}
\item[(i)] For all $x_{0}\in U$, $\lim_{t\rightarrow-\infty}x(t,x_{0})=0$;

\item[(ii)] There are three positive rational numbers $\gamma$, $\epsilon$,
and $\delta$ such that $|x(t,x_{0})|\leq\gamma2^{\epsilon t}$ for all $t\leq0$
whenever $x_{0}\in U$ and $|x_{0}|\leq\delta$.
\end{enumerate}

Moreover, if $k>0$, then a rational number $\eta$ and a ball $D$ can be
computed from $f$ such that for any solution $x(t,x_{0})$ to the equation
(\ref{e0}) with $x_{0}\in D\setminus U$, $\{ x(t,x_{0}): \ t\leq
0\}\not \subset B(0, \eta)$ no matter how close the initial value $x_{0}$ is
to the origin.
\end{theorem}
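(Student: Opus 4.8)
The plan is to deduce this theorem from Theorem \ref{t1} by a time-reversal argument. Set $g(x) = -f(x)$ and consider the reversed system $\dot{x} = g(x)$. Since $f$ is $C^{1}$-computable, so is $g$: given a $C^{1}$ $\rho$-name $\{P_{l}\}$ of $f$, the sequence $\{-P_{l}\}$ is a $C^{1}$ $\rho$-name of $g$ because $d_{C^{1}(E)}(-f,-P_{l}) = d_{C^{1}(E)}(f,P_{l})$; hence the passage $f\mapsto g$ is effective. The origin is again an equilibrium point, and $Dg(0) = -A$ where $A = Df(0)$. The eigenvalues of $-A$ are exactly the negatives of the eigenvalues of $A$, so $-A$ has $n-k$ eigenvalues with negative real part and $k$ eigenvalues with positive real part. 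Since $k < n$ we have $0 < n-k \le n$, so the hypotheses of Theorem \ref{t1} hold for $\dot{x} = g(x)$ with the roles of $k$ and $n-k$ interchanged.

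Next I would apply Theorem \ref{t1} to $\dot{x} = g(x)$. This yields a (Turing) algorithm which, composed with the effective map $f\mapsto g$, is an algorithm taking $f$ as input and producing an $(n-k)$-dimensional computable manifold $S' \subset \mathbb{R}^{n}$ containing $0$ together with positive rationals $\gamma,\epsilon,\delta$ such that, writing $y(t,x_{0})$ for the solution of $\dot{x}=g(x)$ with $y(0)=x_{0}$: for all $x_{0}\in S'$, $\lim_{t\to+\infty} y(t,x_{0}) = 0$; and $|y(t,x_{0})| \le \gamma 2^{-\epsilon t}$ for all $t\ge 0$ whenever $x_{0}\in S'$ and $|x_{0}|\le\delta$. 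Moreover, since the hypothesis $k>0$ here is precisely the condition ``$n-k<n$'' needed to invoke the last clause of Theorem \ref{t1}, when $k>0$ we also obtain a rational $\eta$ and a ball $D$, computable from $g$ and hence from $f$, such that $\{y(t,x_{0}): t\ge 0\}\not\subset B(0,\eta)$ for every $x_{0}\in D\setminus S'$.

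Finally I would transfer this information back to the original system via the identity $x(t,x_{0}) = y(-t,x_{0})$, which holds because reversing time turns solutions of $\dot{x}=g(x)=-f(x)$ into solutions of $\dot{x}=f(x)$; uniqueness of solutions (guaranteed since (\ref{e0}) defines a dynamical system) makes this a bijective correspondence with the same value at $t=0$. Setting $U = S'$: for $x_{0}\in U$, $\lim_{t\to-\infty} x(t,x_{0}) = \lim_{s\to+\infty} y(s,x_{0}) = 0$, giving (i); for $t\le 0$, $|x(t,x_{0})| = |y(-t,x_{0})| \le \gamma 2^{-\epsilon(-t)} = \gamma 2^{\epsilon t}$ when $x_{0}\in U$ and $|x_{0}|\le\delta$, giving (ii); and for $x_{0}\in D\setminus U$, $\{x(t,x_{0}): t\le 0\} = \{y(s,x_{0}): s\ge 0\}\not\subset B(0,\eta)$, giving the last assertion. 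The manifold $U$ is computable because $S'$ is, and the whole construction is a single algorithm in $f$.

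There is no serious obstacle here; the content is essentially bookkeeping about time reversal. The two points that need care are: (a) checking that time reversal preserves the computability hypotheses, i.e. that $g=-f$ is $C^{1}$-computable with a $\rho$-name effectively obtainable from that of $f$, and that the solution map of the reversed system is the time-reversed solution map of (\ref{e0}); and (b) matching the index conventions, so that the hypothesis ``$0\le k<n$'' here corresponds to ``$0<k\le n$'' in Theorem \ref{t1} (via $k\mapsto n-k$) and the hypothesis ``$k>0$'' of the ``moreover'' clause corresponds to the hypothesis ``$k<n$'' of the ``moreover'' clause of Theorem \ref{t1}. Both are routine to verify.
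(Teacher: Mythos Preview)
Your proposal is correct and takes essentially the same approach as the paper: the paper's proof consists of a single sentence observing that $U$ can be computed by applying the same procedure used for $S$ to the time-reversed equation $\dot{x}=-Ax-F(x)$, which is exactly your reduction via $g=-f$. Your version is more explicit about the bookkeeping (computability of $g$, the solution correspondence $x(t,x_{0})=y(-t,x_{0})$, and the index swap $k\leftrightarrow n-k$), but the underlying argument is identical.
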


\begin{proof}
The unstable manifold $U$ can be computed by the same procedure as the
construction of $S$ by considering the equation
\[
\dot{x}=-Ax-F(x(t))
\]

\end{proof}

The proof can be easily extended to show that the map: ${\mathfrak{F}}%
_{H}\rightarrow{\mathfrak{K}}\times{\mathfrak{K}}$, $f\mapsto(S_{f},U_{f})$,
is computable, where ${\mathfrak{F}}_{H}$ is the set of all $C^{1}$ functions
having the origin as a hyperbolic equilibrium point, ${\mathfrak{K}}$ is the
set of all compact subsets of ${\mathbb{R}}^{n}$, and $S_{f}$ and $U_{f}$ are
some local stable and unstable manifolds of $f$ at the origin respectively.


\section{Computability and non-computability of global stable/unstable
manifolds\label{Sec:NotComputable}}

Although the stable and unstable manifolds can be computed locally as shown in
the previous section, globally they may not be
necessarily computable.

In \cite{Zho09} it is shown that there exists a $C^{\infty}$ and
polynomial-time computable function $f: \mathbb{R}^{2}\to\mathbb{R}^{2}$ such
that the equation $\dot{x}=f(x)$ has a sink at the origin \textbf{0} and the
basin of attraction (also called the domain of attraction), $B_{f}%
(\mathbf{0})$, of $f$ at \textbf{0} is a non-computable open subset of
$\mathbb{R}^{2}$. Since a sink is a hyperbolic equilibrium point (with all
eigenvalues of the gradient matrix $Df(\mathbf{0})$ having negative real
parts) and the basin of attraction at a sink is exactly the global stable
manifold at this equilibrium point, we conclude immediately that the global
stable manifold is not necessarily computable.

One the other hand, it is also shown in \cite{Zho09} that, although not
necessarily computable, the basin of attraction $B_{f}(x_{0})$ is r.e. open
for a computable sink $x_{0}$ of a computable system $f$. In other words, the
open subset $B_{f}(x_{0})$ can be plotted from inside by a computable sequence
$\{B_{n}\}$ of rational open balls, $B_{f}(x_{0})=\bigcup B_{n}$, but one may
not know the rate at which $B_{f}(x_{0})$ is being filled up by $B_{n}$'s if
$B_{f}(x_{0})$ is non-computable. When an equilibrium point $x_{0}$ of the
system $\dot{x}=f(x)$ is a saddle (not a sink nor a source), then the global
stable (unstable) manifold $W_{f}^{s}(x_{0})$ ($W_{f}^{u}(x_{0})$) is in
general an $F_{\sigma}$-set of $\mathbb{R}^{n}$. As in the special case of the
basin of attraction, $W_{f}^{s}(x_{0})$ can also be plotted from inside by a
computable process. To make the concept precise, we introduce the following
definition. Let $\mathcal{F}(\mathbb{R}^{n})$ denote the set of all
$F_{\sigma}$-subsets of $\mathbb{R}^{n}$.

\begin{definition}
A function $f:X\rightarrow\mathcal{F}(\mathbb{R}^{n})$ is called
semi-computable if there is a Type-2 machine such that on any $\rho$-name of
$x\in X$, the machine computes as output a sequence $\{a_{j,k}\}$, $a_{j,k}%
\in\mathbb{Q}^{n}$, such that
\[
f(x)=\bigcup_{j=0}^{\infty}\overline{\{ a_{j,k} : k\in\mathbb{N}\}}
\]
where $\overline{A}$ denotes the closure of the set $A$.
\end{definition}

We call this function semi-computable because we can plot a dense subset of
the set $f(x)$ by a computable process, but we cannot tell in a finite amount
of time what the ``density" is.

\begin{theorem}
\label{t-5-1} The map $\psi: C^{1}(\mathbb{R}^{n})\times\mathbb{R}^{n}%
\to\mathcal{F}(\mathbb{R}^{n})$, $(f, x_{0})\to W^{s}_{f}(x_{0})$, is
semi-computable, where $(f, x_{0})\in\mbox{dom}(\psi)$ if $x_{0}$ is a
hyperbolic equilibrium point of $f$, and $W^{s}_{f}(x_{0})$ is the global
stable manifold of $f$ at $x_{0}$.
\end{theorem}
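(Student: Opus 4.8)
The plan is to assemble the required Type-2 machine out of the local results of Section~\ref{sec_computable} together with the computability of the solution operator; the passage from a local stable manifold $S$ to the global stable manifold, which by (\ref{Eq:global}) is a countable union of pieces obtained by flowing $S$ backward, costs nothing except quantitative control over how fast those pieces exhaust the union, and that is exactly what semi-computability is designed to tolerate. First I would reduce to the case $x_0=0$: given a $C^{1}$ $\rho$-name of $f$ and a $\rho$-name of the hyperbolic equilibrium $x_0$, translation is a computable operation on $C^{1}(\mathbb{R}^{n})$-names, so one can compute a $C^{1}$ $\rho$-name of $g(y):=f(y+x_0)$; this $g$ has $0$ as a hyperbolic equilibrium with $Dg(0)=Df(x_0)$, and $W^{s}_{f}(x_0)=W^{s}_{g}(0)+x_0$. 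From the computable matrix $Dg(0)$ one computes the coefficients, and then the roots (with multiplicity), of its characteristic polynomial; since hyperbolicity forbids roots on the imaginary axis, the sign of each real part is decidable, so the number $k$ of eigenvalues with negative real part is computable. If $k=0$ then $W^{s}_{g}(0)=\{0\}$ and we output $a_{j,i}=q_i$ for all $j$, where $q_i\in\mathbb{Q}^{n}$ is a $2^{-i}$-approximation of $x_0$; so assume $1\le k\le n$.

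Next I would invoke Theorem~\ref{t1}, in the uniform form recorded at the end of Section~\ref{sec_computable}, applied to $g$. This produces, uniformly from the name of $g$, a $k$-dimensional compact local stable manifold $S\subset\mathbb{R}^{n}$ tangent at $0$ to the stable subspace of $Dg(0)$, together with a computable sequence $\{s_i\}_{i\in\mathbb{N}}$ effectively dense in $S$ (this sequence, $\{b_i+\phi_A(b_i)\}$, is exhibited explicitly in the proof of Claim~3). Since $S$ is then a local stable manifold of $g$ at $0$ in the sense of the Stable Manifold Theorem --- in fact a neighbourhood of $0$ inside the locally unique stable manifold --- the global stable manifold is $W^{s}_{g}(0)=\bigcup_{j=0}^{\infty}x^{g}(-j,S)$, the union in (\ref{Eq:global}) for $\dot y=g(y)$, where $x^{g}(t,a)$ denotes the solution of $\dot y=g(y)$ with $x^{g}(0,a)=a$; and this does not depend on which local $S$ was produced.

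Because $g$ is $C^{1}$-computable, the solution operator $(t,a)\mapsto x^{g}(t,a)$ is computable (\cite{GZB07}), so $(j,i)\mapsto x^{g}(-j,s_i)$ is a computable double sequence in $\mathbb{R}^{n}$. For each fixed $j$ the set $x^{g}(-j,S)$ is the continuous image of the compact set $S$, hence compact (the flow being complete, as assumed throughout Section~\ref{sec_computable}), and $\{x^{g}(-j,s_i):i\in\mathbb{N}\}$ is dense in it. Choosing $q_i\in\mathbb{Q}^{n}$ with $|q_i-x_0|\le 2^{-i}$ and $a_{j,i}\in\mathbb{Q}^{n}$ within $2^{-i}$ of $x^{g}(-j,s_i)+q_i$, we obtain a computable double sequence with $\overline{\{a_{j,i}:i\in\mathbb{N}\}}=x^{g}(-j,S)+x_0$, so that
\[
\bigcup_{j=0}^{\infty}\overline{\{a_{j,i}:i\in\mathbb{N}\}}=\Big(\bigcup_{j=0}^{\infty}x^{g}(-j,S)\Big)+x_0=W^{s}_{g}(0)+x_0=W^{s}_{f}(x_0).
\]
This is exactly the output demanded by the definition of semi-computability, so $\psi$ is semi-computable.

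The point to stress is that almost all of the substance is borrowed --- the computable stable manifold theorem and the computability of the flow --- and the argument above is bookkeeping on top of them; accordingly the places still needing care are soft ones: (a) that the change of variables carrying $x_0$ to the origin is computable even though $x_0$ is given only through approximations, and that $k$ is decidable from the hyperbolicity hypothesis; (b) the observation that each $x^{g}(-j,S)$ is r.e.\ closed, uniformly in $j$, as the continuous image of a computable compact set (cf.\ Theorem~6.2.4 of \cite{Wei00}); and (c) the identification of $\bigcup_{j}x^{g}(-j,S)$ with the full global stable manifold independently of the particular local $S$. The real obstruction --- the reason ``semi-computable'' cannot be improved to ``computable'' here --- is that one cannot certify in finite time how much of $W^{s}_{f}(x_0)$ has already been covered by the first finitely many pieces; this is the same phenomenon responsible for the non-computability of basins of attraction recalled at the start of Section~\ref{Sec:NotComputable}.
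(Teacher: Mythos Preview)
Your proposal is correct and follows essentially the same route as the paper's proof: invoke Theorem~\ref{t1} (in its uniform form) to obtain a computable compact local stable manifold $S$, use the computability of the flow from \cite{GZB07} to compute the backward images $\phi_{-j}(S)$ as compact sets via Theorem~6.2.4 of \cite{Wei00}, and read off dense sequences in each piece. The paper's argument is terser and omits the bookkeeping you spell out (the translation to the origin, the decidability of $k$, the $k=0$ case, and the explicit rational approximation step), but the substance is identical.
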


\begin{proof}
From Theorem \ref{t1} one can compute from $f$ and $x_{0}$ a compact subset
$S$ of $\mathbb{R}^{n}$, which is a local stable manifold of $f$ at $x_{0}$.
Note that the global stable manifold of $f$ at $x_{0}$ is the union of the
backward flows of $S$, $\emph{i.e.}$,
\[
W_{f}^{s}(x_{0})=\bigcup_{j=0}^{\infty}\phi_{-j}(S)
\]
where $\phi_{t}(a)$ is the flow induced by the equation $\dot{x}=f(x)$ at time
$t$ with the initial data $x(0)=a$. Since the sequence $\{\psi_{-j}(a)\}$ is
computable from $f$ and $a$ \cite{GZB07}, it follows that the sequence
$\{\phi_{-j}(S)\}$ of compact sets are computable from $f$ and $x_{0}$
(Theorem 6.2.4 \cite{Wei00}). In particular, a sequence $\{a_{j,k}%
\}_{j,k\in\mathbb{N}}\subset\mathbb{R}^{n}$ can be computed such that
$W_{f}^{s}(x_{0})=\bigcup_{j=0}^{\infty}\overline{\{a_{j,k}:k\in\mathbb{N}\}}$.
\end{proof}

The function $f$ in the counterexample mentioned at the beginning of this
section is $C^{\infty}$ but not analytic. It is an open problem whether or not
the stable/unstable manifold(s) of a computable analytic hyperbolic system
$\dot{x}=f(x)$ (\emph{i.e.} $f$ is computable and analytic) is computable. In
the following we present a negative answer to a weaker version of the open
problem; we show that it is impossible to compute uniformly the closure of the
global stable/unstable manifold from $(f, x_{0})$, where $f$ is analytic and
$x_{0}$ is a hyperbolic equilibrium point of $f$. Let's denote by
$\omega(\mathbb{R}^{n})$ the set of real analytic functions $f:\mathbb{R}%
^{n}\to\mathbb{R}^{n}$ and $\mathcal{A}(\mathbb{R}^{n})$ the set of all closed
subsets of $\mathbb{R}^{n}$. With Wijsman topology $\mathcal{W}$ on
$\mathcal{A}(\mathbb{R}^{n})$, $(\mathcal{A}(\mathbb{R}^{n}), \mathcal{W})$ is
a topological space that is separable and metrizable with a complete metric.
It can be shown that the Wijsman topology on $\mathcal{A}(\mathbb{R}^{n})$ is
the same as the topology induced by the $\rho$-names giving rise to the notion
of computable close subsets of $\mathbb{R}^{n}$ as defined in Def.
\ref{def_r_e_open} (\cite{BW99}). Thus if a map $F:X\to\mathcal{A}%
(\mathbb{R}^{n})$ is computable (with respect to above $\rho$-names for
elements of $\mathcal{A}(\mathbb{R}^{n})$), then $F$ is continuous (with
respect to Wijsman topology on $\mathcal{A}(\mathbb{R}^{n})$) (Corollary
3.2.12 of \cite{Wei00}).

\begin{theorem}
The map $\psi:\omega(\mathbb{R}^{n})\times\mathbb{R}^{n}\rightarrow
\mathcal{A}(\mathbb{R}^{n})$, $(f,x_{0})\rightarrow\overline{W_{f}^{u}(x_{0}%
)}$ (the closure of $W_{f}^{u}(x_{0})$), is not computable, where
$(f,x_{0})\in\mbox{dom}(\psi)$ if $x_{0}$ is a hyperbolic equilibrium point of
$f$, and $W_{f}^{u}(x_{0})$ is the global unstable manifold of $f$ at $x_{0}$.
\end{theorem}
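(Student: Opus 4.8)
The plan is to derive non-computability from discontinuity, which is exactly what the preamble to the statement sets up: a map into $(\mathcal{A}(\mathbb{R}^{n}),\mathcal{W})$ that is computable (with respect to the $\rho$-names of Definition \ref{def_r_e_open}) is continuous for the Wijsman topology $\mathcal{W}$ (Corollary 3.2.12 of \cite{Wei00}, together with the identification of $\mathcal{W}$ with the representation topology from \cite{BW99}). So it is enough to produce one pair $(f_{0},0)\in\mbox{dom}(\psi)$ and a sequence $(f_{m},0)\to(f_{0},0)$ in the domain for which $\overline{W^{u}_{f_{m}}(0)}$ does not converge to $\overline{W^{u}_{f_{0}}(0)}$ in $\mathcal{W}$; and since $\mathcal{W}$ is the initial topology generated by the distance functionals $A\mapsto d(p,A)$, it suffices to exhibit a single point $p$ with $d\big(p,\overline{W^{u}_{f_{m}}(0)}\big)\not\to d\big(p,\overline{W^{u}_{f_{0}}(0)}\big)$.

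The example I would use is the Duffing family in the plane (so $n=2$, $k=1$; the general $n$ follows by appending contracting directions $\dot z=-z$, $z\in\mathbb{R}^{n-2}$, which leaves the unstable manifold unchanged up to the factor $\{0\}$): for $\lambda\ge0$ set $f_{\lambda}(x,y)=(y,\;x-x^{3}-\lambda y)$, i.e.\ $\dot x=y,\ \dot y=x-x^{3}-\lambda y$. Each $f_{\lambda}$ is a polynomial, hence analytic, vector field, and $f_{1/m}\to f_{0}$ in the topology carried by the $\rho$-names (they differ by $\tfrac1m$ times the fixed entire field $(0,y)$). For the energy $H(x,y)=\tfrac12 y^{2}-\tfrac12 x^{2}+\tfrac14 x^{4}$ one has $\dot H=-\lambda y^{2}$; since $H\ge-\tfrac14$ everywhere and $H$ grows at most exponentially along orbits, the flow of $f_{\lambda}$ is complete. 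Finally $Df_{\lambda}(0)=\left(\begin{smallmatrix}0&1\\1&-\lambda\end{smallmatrix}\right)$ has eigenvalues $\tfrac{-\lambda\pm\sqrt{\lambda^{2}+4}}{2}$, one positive and one negative, so the origin is a hyperbolic saddle and $(f_{\lambda},0)\in\mbox{dom}(\psi)$ for every $\lambda\ge0$.

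Now I would describe the two sides and extract the contradiction. For $\lambda=0$ the system is Hamiltonian with Hamiltonian $H$, and $W^{u}_{f_{0}}(0)$ is precisely the classical figure-eight $\{H=0\}$: the origin together with the two homoclinic loops encircling the centers $(\pm1,0)$. This is a bounded closed set not meeting the centers, so $\delta:=d\big((1,0),\overline{W^{u}_{f_{0}}(0)}\big)>0$. For $\lambda>0$, observe that if $x_{0}\in W^{u}_{f_{\lambda}}(0)$ with $x_{0}\ne0$, then $H$ is nondecreasing along the backward orbit and $H(\phi_{-t}(x_{0}))\to H(0)=0$, so $H(x_{0})<0$ (equality would force $y\equiv0$, hence $x_{0}=0$, along that backward orbit). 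Thus $W^{u}_{f_{\lambda}}(0)\setminus\{0\}\subset\{H<0\}$, whose two connected components are the wells around $(1,0)$ and $(-1,0)$; since the two branches of the local unstable manifold leave the origin into $\{x>0\}$ and $\{x<0\}$ respectively (directions read off from the unstable eigenvector $(1,\tfrac{-\lambda+\sqrt{\lambda^{2}+4}}{2})$ of $Df_{\lambda}(0)$), each branch is confined to one well. On the right well, the bounded forward-invariant region $\{H\le H(x_{0}),\,x>0\}$ together with LaSalle's invariance principle — the only invariant subset of $\{\dot H=0\}=\{y=0\}$ there being the equilibrium $(1,0)$ — forces that branch to converge to $(1,0)$; likewise the other branch converges to $(-1,0)$. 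Hence $(1,0)\in\overline{W^{u}_{f_{\lambda}}(0)}$, so $d\big((1,0),\overline{W^{u}_{f_{\lambda}}(0)}\big)=0$ for every $\lambda>0$, while $d\big((1,0),\overline{W^{u}_{f_{0}}(0)}\big)=\delta>0$. Taking $p=(1,0)$ and $\lambda=1/m$ shows $\psi$ is discontinuous at $(f_{0},0)$, hence not computable.

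The step I expect to be the main obstacle is the identification, for $\lambda>0$, of $\overline{W^{u}_{f_{\lambda}}(0)}$ with enough precision to place $(1,0)$ in it — that is, showing each branch of the global unstable manifold is genuinely trapped inside one well and converges to the corresponding center. This is where the energy identity $\dot H=-\lambda y^{2}$, the two-component structure of $\{H<0\}$, the boundedness of $\{H\le H(x_{0}),\,x>0\}$, and LaSalle's principle must be combined carefully; once this is in place, the remaining ingredients (analyticity and completeness of the $f_{\lambda}$, hyperbolicity of the origin, the convergence $f_{1/m}\to f_{0}$, the elementary description of the Hamiltonian figure-eight, and the passage from discontinuity to non-computability) are routine.
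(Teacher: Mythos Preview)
Your proposal is correct and follows the same high-level strategy as the paper: exhibit a one-parameter family of analytic planar vector fields along which $(f,x_0)\mapsto\overline{W^{u}_{f}(x_0)}$ is discontinuous in the Wijsman topology, then invoke the fact that computable maps are continuous. The difference is the concrete family. The paper uses the system $x'=x^{2}-1$, $y'=-xy+\mu(x^{2}-1)$ from \cite{HSD04}, with the saddle $z_{2}=(1,0)$: at $\mu=0$ the left branch of $W^{u}_{f_{0}}(z_{2})$ is the bounded segment $\{(x,0):-1<x<1\}$ (a heteroclinic connection to $z_{1}=(-1,0)$), while for $\mu<0$ it misses $z_{1}$ and is swept to $y\to+\infty$ along $x=-1$, so the closures fail to converge as $\mu\to0^{-}$. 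You instead use the damped Duffing oscillator, where the homoclinic figure-eight at $\lambda=0$ breaks and each branch spirals into the sink $(\pm1,0)$ for $\lambda>0$, detected by the single distance functional $d((1,0),\cdot)$. The paper's example has the virtue that parts of the phase portrait (the invariant lines $x=\pm1$) are explicit and no Lyapunov argument is needed; your example has the virtue of a clean energy-dissipation/LaSalle argument and a discontinuity witnessed by a bounded point rather than by escape to infinity. Either route settles the theorem.
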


\begin{proof}
Consider the following system $\dot{x}=f_{\mu}(x)$ taken from \cite{HSD04},%
\begin{align}
x^{\prime}  &  =x^{2}-1\label{Eq:big_system}\\
y^{\prime}  &  =-xy+\mu(x^{2}-1)\nonumber
\end{align}
The system (\ref{Eq:big_system}) has two equilibria: $z_{1}=(-1,0)$ and
$z_{2}=(1,0)$. $Df_{\mu}(z_{1})$ has eigenvalues $-2$ and $1$, associated to
the eigenvectors $(-1,-2\mu/3)$ and $(0,1)$, respectively, and $Df_{\mu}%
(z_{2})$ has eigenvalues $2$ and $-1$, associated to the eigenvectors
$(-1,-2\mu/3)$ and $(0,1)$, respectively. Therefore both points $z_{1}$ and
$z_{2}$ are saddles (the behavior of the system is sketched in
Fig.\ \ref{fig:dynamics}). From this information and the fact that any point
$(-1,y)$ or $(1,y)$ can only move along the $y$ axis, one concludes that the
line $x=-1$ is the unstable manifold of $z_{1}$ and the line $x=1$ gives the
stable manifold of $z_{2}$. Note that $z_{1},z_{2}$ and the above manifolds do
not depend on $\mu.$

Let us now study the unstable manifold of $z_{2}$. It is split by the stable
manifold $x=1$, and hence there is a \textquotedblleft right\textquotedblright%
\ as well as a \textquotedblleft left\textquotedblright\ portion of the
unstable manifold$.$ Let us focus our attention to the \textquotedblleft
left\textquotedblright\ portion. From the system (\ref{Eq:big_system}) one
concludes that any point $z$ near $z_{2}$, located to its left (i.e.\ $z<z_{2}%
$) will be pushed to the line $x=-1$ at a rate that is independent of the
$y$-coordinate of $z$.

When $\mu=0$, the eigenvector of $Df_{\mu}(z_{2})$ associated to the unstable
manifold is $(-1,0)$. Looking at (\ref{Eq:big_system}), one concludes that the
\textquotedblleft left\textquotedblright\ portion of the unstable manifold of
$z_{2}$ can only be the segment of line
\[
U_{l,0}=\{(x,0)\in\mathbb{R}^{2}|-1<x<1\}.
\]

Now let us analyze the case where $\mu<0$. Since $(-1,-2\mu/3)$ is the
eigenvector of $Df_{\mu}(z_{2})$ associated to the unstable manifold, as the
unstable manifold of $z_{2}$ moves to the left, its $y$-coordinate starts to
grow. The \textquotedblleft left\textquotedblright\ portion of the unstable
manifold is always above the line $y=0$ (if it could be $y=0$, the dynamics of
(\ref{Eq:big_system}) would push the trajectory upwards), and as soon as its
$x$-coordinate is less than 0 (this will eventually happen), the trajectory is
pushed upwards with $y$-coordinate converging to $+\infty$. Notice that the
closer $\mu$ is to $0$, the lesser the $y$-component of the trajectory grows,
and the closer to $z_{1}$ the unstable manifold will be.

If $U_{l,\mu}$ represents the left portion of the unstable manifold of $z_{2}$
and
\[
A=U_{l,0}\cup\{(-1,y)\in\mathbb{R}^{2}|0\leq y\}
\]
then one concludes that
\[
\lim_{\mu\rightarrow0^{-}}d(A,U_{l,\mu})=0
\]
where $d$ is the Hausdorff distance on $\mathbb{R}^{n}$.

Suppose that $\psi$ is computable. Then, in particular, the map $\chi
:\mathbb{R}\rightarrow\mathcal{A}(\mathbb{R}^{n})$ defined by $\chi
(\mu)=\overline{W_{f_{\mu}}^{u}(z_{2})}$, is also computable.
Since computable maps must be continuous (Corollary 3.2.12 of \cite{Wei00}),
this implies that $\chi$ should be a continuous map. But the point $\mu=0$ is
a point of discontinuity for $\chi$ since (without loss of generality, we
restrict ourselves to the semi-plane $x<1$)%
\[
\lim_{\mu\rightarrow0^{-}}\chi(\mu)=\lim_{\mu\rightarrow0^{-}}\overline
{U}_{l,\mu}=\overline{A}\neq\overline{U}_{l,0}=\chi(0).
\]
and hence this map cannot be computable, which implies that $\psi$ is not
computable either.
\end{proof}

\begin{center}
\begin{figure}[ptb]
\begin{center}
\includegraphics[width=13cm]{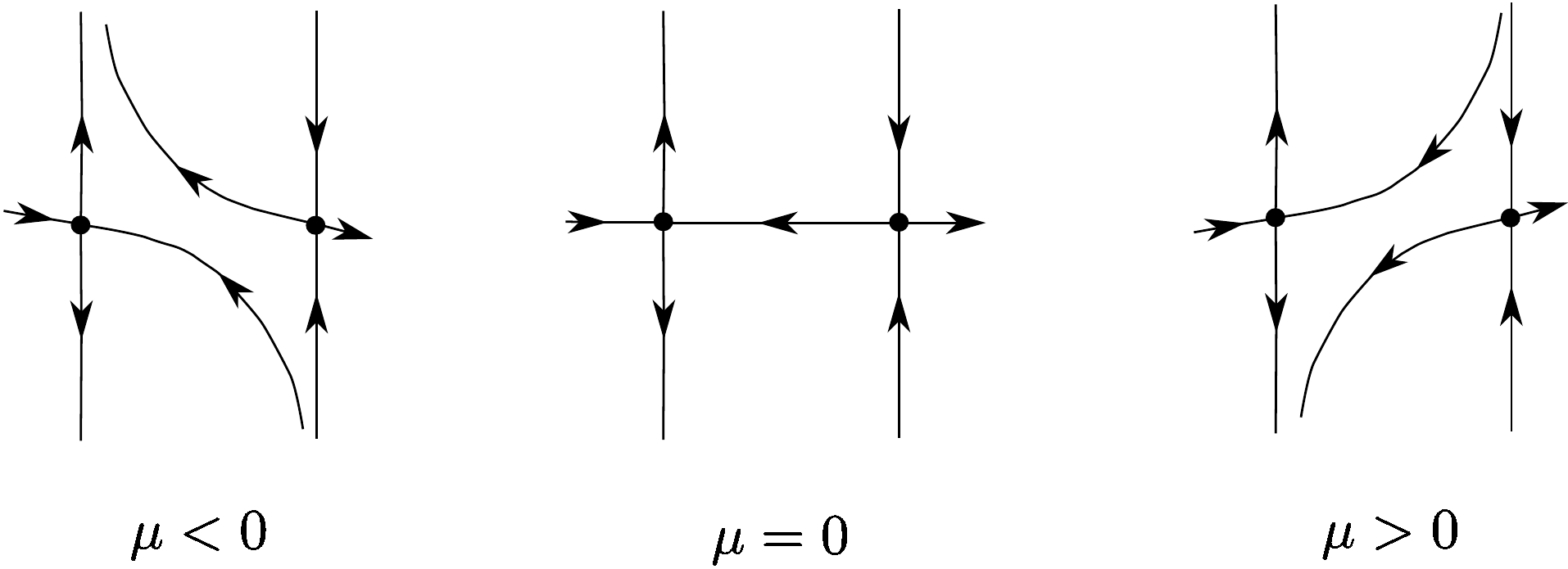}
\end{center}
\caption{Dynamics of the system \eqref{Eq:big_system}.}%
\label{fig:dynamics}%
\end{figure}
\end{center}

\section{The Smale horseshoe is computable}

In this section we show that Smale's horseshoe is computable.

\begin{theorem}
\label{thm_smale} The Smale Horseshoe $\Lambda$ is a computable (recursive)
closed subset in $I=[0,1]\times[0,1]$.
\end{theorem}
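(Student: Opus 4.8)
The plan is to exploit the fact that, although the horseshoe map $f$ is only piecewise affine, the set $\Lambda$ admits a completely explicit, computable combinatorial description that never invokes a non-constructive existence theorem — in sharp contrast with the stable-manifold situation of Sections~4--5. First I would fix a normalized model of $f$ (as in \cite{GH83} or \cite{HSD04}) in which the two ``legs'' $V_{0},V_{1}=f^{-1}(S)\cap S$ are full-height vertical strips, $f$ is affine with rational coefficients on each of them, and $f(S)\cap S=H_{0}\cup H_{1}$ are two full-width horizontal strips. A short induction then shows that $\bigcap_{n=0}^{N}f^{-n}(S)$ is a union of $2^{N}$ full-height vertical strips of width $\lambda^{N}$ and $\bigcap_{n=0}^{N}f^{n}(S)$ is a union of $2^{N}$ full-width horizontal strips of height $\mu^{-N}$; in the orientation-preserving case this gives $\bigcap_{n\ge0}f^{-n}(S)=C\times[0,1]$ and $\bigcap_{n\ge0}f^{n}(S)=[0,1]\times C'$, so that
\[
\Lambda=\Big(\bigcap_{n\ge0}f^{-n}(S)\Big)\cap\Big(\bigcap_{n\ge0}f^{n}(S)\Big)=C\times C',
\]
where $C,C'\subseteq[0,1]$ are the self-similar Cantor sets generated by the two associated rational affine iterated function systems (contraction ratios $\lambda$ and $1/\mu$).

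The theorem then reduces to two routine facts: a self-similar Cantor set $C=\bigcap_{N}K_{N}$ — where $\{K_{N}\}$ is a decreasing sequence of finite unions of closed rational intervals whose lengths shrink to $0$ geometrically — is a computable compact subset of $[0,1]$, and a product of computable compact sets is a computable compact set. For the first point: $C$ is \textbf{co-r.e. closed} because $C^{c}=\bigcup_{N}K_{N}^{c}$ is an increasing union of rational open sets, uniformly computable from the explicit description of $K_{N}$; and $C$ is \textbf{r.e. closed} because the endpoints of the level-$N$ construction intervals are rational, lie in $C$ (they are the images of the fixed points $0,1$ of the generators, i.e.\ the points of $C$ with eventually-constant address), and are dense in $C$, since any point of $C$ lies in a level-$N$ interval of diameter $\lambda^{N}\to 0$; enumerating all of them gives the required computable dense sequence. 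The product step is equally direct: the rational corners of the level-$N$ rectangles $K_{N}\times K'_{N}$ lie in $C\times C'=\Lambda$ and are dense in it, while $(C\times C')^{c}=(C^{c}\times\mathbb{R})\cup(\mathbb{R}\times (C')^{c})$ is r.e. open. Since $\Lambda\subseteq[0,1]^{2}$ is rationally bounded, being computable as a closed set makes it a computable compact set, which is exactly the claim. (Equivalently, one may skip the product decomposition and run the same two arguments directly on the decreasing family $\Lambda_{N}=\bigcap_{|j|\le N}f^{j}(S)$ of computable unions of rational rectangles.)

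The only place where genuine care is needed is the first paragraph: the horseshoe is ``defined'' in the excerpt only up to the words ``followed by a folding'', so one must commit to a concrete normal form in which the strips and the restrictions $f|_{V_{i}}$ carry rational (or at least computable) data, and observe that the folding region contributes nothing to $\Lambda$ — only the affine behaviour on the legs survives the bi-infinite intersection. If the chosen model includes an orientation-reversing leg, $\Lambda$ is no longer literally a Cartesian product but is still the attractor of an explicit rational affine IFS on rectangles, and every estimate above goes through verbatim. Once this normalization is pinned down there is no real obstacle: the argument is elementary bookkeeping with nested rational rectangles, and in particular none of the resolvent/eigenvalue machinery of the preceding sections is needed here.
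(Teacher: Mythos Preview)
Your proposal is correct and rests on the same combinatorial fact the paper uses: the finite intersections $\Lambda_{N}=\bigcap_{|j|\le N}f^{j}(S)\cap S$ are explicit finite unions of rational rectangles whose diameters shrink geometrically to~$0$. The packaging differs slightly. The paper does not pass through the product decomposition $\Lambda=C\times C'$; instead it fixes $\mu=4$, $\lambda=1/4$, sets $U_{n}=I\setminus\Lambda_{n}$, and shows directly that $I\setminus\Lambda=\bigcup_{n}U_{n}$ is a computable open subset of $I$ by invoking a characterization from \cite{Zho96}: one lists the rational open rectangles making up the $U_{n}$ and uses the geometric bound $d(\Lambda,\Lambda_{n})\le(1/4)^{n}$ to supply the required recursive modulus of Hausdorff convergence. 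This is exactly your parenthetical alternative. Your primary route via $C\times C'$ buys a cleaner separation into ``r.e.\ closed'' (dense rational endpoints) and ``co-r.e.\ closed'' (union of rational gaps) and avoids citing the \cite{Zho96} criterion, at the cost of an extra product lemma; neither approach is harder than the other. One minor remark: even in the standard model with an orientation-reversing leg, $\bigcap_{n\ge0}f^{-n}(S)$ is still a union of full-height vertical strips and $\bigcap_{n\ge0}f^{n}(S)$ a union of full-width horizontal strips, so $\Lambda$ remains a genuine Cartesian product $C\times C'$; your hedge there is unnecessary though harmless.
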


\begin{proof}
We show that $\Omega=I\setminus\Lambda$ is a computable open subset in $I$ by
making use of the fact \cite{Zho96}: An open subset $U\subseteq I$ is
computable if and only if there is a computable sequence of rational open
rectangles (having rational corner points) in $I$, $\{ J_{k}\}_{k=0}^{\infty}%
$, such that

\begin{itemize}
\item[(a)] $J_{k}\subset U$ for all $k\in{\mathbb{N}}$,

\item[(b)] the closure of $J_{k}$, $\bar{J}_{k}$, is contained in $U$ for all
$k\in{\mathbb{N}}$, and

\item[(c)] there is a recursive function $e:{\mathbb{N}}\to{\mathbb{N}}$ such
that the Hausdorff distance $d(I\setminus\cup_{k=0}^{e(n)}J_{k}, I\setminus
U)\leq2^{-n}$ for all $n\in{\mathbb{N}}$. \newline
\end{itemize}

Let $f:I\to{\mathbb{R}}^{2}$ be a map such that $\Lambda=\bigcap_{n=-\infty
}^{\infty}(f^{n}(I)\cap I)$ is the Smale horseshoe. Without loss of generality
assume that $f$ performs a linear vertical expansion by a factor of $\mu=4$
and a linear horizontal contraction by a factor of $\lambda=\frac{1}{4}$. For
each $n\in{\mathbb{N}}$, let $U_{n}=I\setminus\bigcap_{k=-n}^{n}(f^{k}(I)\cap
I)$. Then $I\setminus\Lambda=\bigcup_{n=0}^{\infty}U_{n}$. Moreover,

\begin{itemize}
\item[(1)] there exists a computable function $\alpha: {\mathbb{N}}%
\to{\mathbb{N}}$ such that $U_{n}$ is a union of $\alpha(n)$ rational open
rectangles in $I$,

\item[(2)] $d(\Lambda, I\setminus\bigcup_{k=0}^{n}U_{n})\leq(1/4)^{n}$.
\end{itemize}

Define $e:{\mathbb{N}}\to{\mathbb{N}}$, $e(n)=\sum_{k=0}^{n}\alpha(k)$,
$n\in{\mathbb{N}}$. Then it follows from the lemma that $I\setminus\Lambda$ is
indeed a computable open subset of $I$.
\end{proof}

\section{Conclusions}

We have shown that, locally, one can compute the stable and unstable manifolds
of some given hyperbolic equilibrium point, though globally these manifolds
are, in general, only semi-computable. It would be interesting to know if
these results are only valid to equilibrium points or can be extended e.g.\ to
hyperbolic periodic orbits. In \cite{GZ10} we provide an example which shows
that the global stable/unstable manifold cannot be computed for hyperbolic
periodic orbits. However, the question whether locally these manifolds can be
computed remains open.

\noindent\textbf{Appendix 1.} Proof of Claim 1. Assume that $u(t,a)$ is a
continuous solution to the integral equation (\ref{e3}). We show that it
satisfies the differential equation (\ref{e2}). To see this, we first
establish a relation between $I_{\Gamma_{j}}$, $j=1,2$, and the Jordan
canonical form of $A$:
\[
A=C\left(
\begin{array}
[c]{ll}%
P & 0\\
0 & Q
\end{array}
\right)  C^{-1}%
\]
where $C$ is an $n\times n$ invertible matrix, $P$ is a $k\times k$ matrix
with eigenvalues $\lambda_{j}$, $1\leq j\leq k$, and $Q$ is a $(n-k)\times
(n-k)$ matrix with eigenvalues $\mu_{j}$, $k+1\leq j\leq n$. By (\ref{e-lin}),
we have
\[
e^{At}=I_{\Gamma_{1}}(t)+I_{\Gamma_{2}}(t)
\]
On the other hand, it is straightforward to show that
\[
e^{At}=C\left(
\begin{array}
[c]{cc}%
e^{Pt} & 0\\
0 & 0
\end{array}
\right)  C^{-1}+C\left(
\begin{array}
[c]{ll}%
0 & 0\\
0 & e^{Qt}%
\end{array}
\right)  C^{-1}%
\]
Since the first $k$ columns of $C$ consists of a basis of the stable subspace
$P_{1}\mathbb{R}^{n}$ and the last $(n-k)$ columns of $C$ is a basis of the
unstable subspace $P_{2}\mathbb{R}^{n}$, it then follows from (\ref{e5}) and
(\ref{e6}) that
\[
I_{\Gamma_{2}}(t)C\left(
\begin{array}
[c]{cc}%
e^{Pt} & 0\\
0 & 0
\end{array}
\right)  C^{-1}=0,\quad\mbox{and}
\]%
\[
I_{\Gamma_{1}}(t)C\left(
\begin{array}
[c]{cc}%
0 & 0\\
0 & e^{Qt}%
\end{array}
\right)  C^{-1}=0
\]
Thus
\[
I_{\Gamma_{1}}(t)=C\left(
\begin{array}
[c]{cc}%
e^{Pt} & 0\\
0 & 0
\end{array}
\right)  C^{-1}%
\]
and
\[
I_{\Gamma_{2}}(t)=C\left(
\begin{array}
[c]{ll}%
0 & 0\\
0 & e^{Qt}%
\end{array}
\right)  C^{-1}%
\]
(recall that $\mathbb{R}^{n}=P_{1}\mathbb{R}^{n}\bigoplus P_{2}\mathbb{R}^{n}%
$). Consequently the integral equation (\ref{e3}) can be written in the
following form:
\begin{align}
u(t,a)  &  =C\left(
\begin{array}
[c]{cc}%
e^{Pt} & 0\\
0 & 0
\end{array}
\right)  C^{-1}a+\int_{0}^{t}C\left(
\begin{array}
[c]{cc}%
e^{P(t-s)} & 0\\
0 & 0
\end{array}
\right)  C^{-1}F(u(s,a))ds\label{e-jordan}\\
&  -\int_{t}^{\infty}C\left(
\begin{array}
[c]{ll}%
0 & 0\\
0 & e^{Q(t-s)}%
\end{array}
\right)  C^{-1}F(u(s,a))ds\nonumber
\end{align}
It is known that if $u(t,a)$ is a continuous solution to (\ref{e-jordan}),
then it is the solution to (\ref{e2}) (\emph{c.f.} \S 2.7 \cite{Per01}). The
proof is complete. \newline

\noindent\textbf{Appendix 2.} Proof of Claim 2. For $a,\tilde{a}\in B$ and
$t\geq0$, where $B=B(0,r)=\{x\in\mathbb{R}^{n}:|x|<r\}$, $r=2^{-d(m_{0})}/2K$
($d$ is the computable function defined in (\ref{ine-0}), and $m_{0}$ is a
positive integer such that $2^{-m_{0}}\leq\frac{\sigma}{4K}$, define
\begin{align*}
u^{(0)}(t,a)  &  =0\\
u^{(j)}(t,a)  &  =I_{\Gamma_{1}}(t)a+\int_{0}^{t}I_{\Gamma_{1}}%
(t-s)F(u^{(j-1)}(s,a))ds\\
&  -\int_{t}^{\infty}I_{\Gamma_{2}}(t-s)F(u^{(j-1)}(s,a))ds,\ \ j\geq1
\end{align*}
We show that the following three inequalities hold for all $j\in\mathbb{N}$:
\begin{align*}
|u^{(j)}(t,a)-u^{(j-1)}(t,a)|  &  \leq K|a|e^{-\alpha_{1}t}/2^{j-1}\\
|u^{(j)}(t,a)|  &  \leq2^{-d(m_{0})}e^{-\alpha_{1}t}\\
|u^{(j)}(t,a)-u^{(j)}(t,\tilde{a})|  &  \leq3K|a-\tilde{a}|
\end{align*}
We argue by induction on $j$. Since $u^{(0)}(t,a)=0$ for any $a$, by
(\ref{Eq:alphas}) and (\ref{eI}) we get $|u^{(1)}(t,a)|=||I_{\Gamma_{1}%
}(t)a||\leq Ke^{-(\alpha+\sigma)t}\cdot2^{-d(m_{0})}/2K<2^{-d(m_{0}%
)}e^{-\alpha_{1}t}$, and $|u^{(1)}(t,a)-u^{(1)}(t,\tilde{a})|=|I_{\Gamma_{1}%
}(t)(a-\tilde{a})|\leq Ke^{-(\alpha+\sigma)t}|a-\tilde{a}|\leq K|a-\tilde{a}%
|$, the three inequalities hold for $j=1$. The estimate (\ref{eI}) is used in
calculations here.

Assume that the three inequalities hold for all $k\leq j$. Then for $k=j+1$,
\begin{align*}
&  |u^{(j+1)}(t,a)-u^{(j)}(t,a)|\\
&  \leq\int_{0}^{t}||I_{\Gamma_{1}}(t-s)||\cdot|F(u^{(j)}(s,a))-F(u^{(j-1)}%
(s,a))|ds\\
&  +\int_{t}^{\infty}||I_{\Gamma_{2}}(t-s)||\cdot|F(u^{(j)}(s,a))-F(u^{(j-1)}%
(s,a))|ds\\
&  \leq\int_{0}^{t}||I_{\Gamma_{1}}(t-s)||\cdot2^{-m_{0}}|u^{(j)}%
(s,a)-u^{(j-1)}(s,a)|ds\\
&  +\int_{t}^{\infty}||I_{\Gamma_{2}}(t-s)||\cdot2^{-m_{0}}|u^{(j)}%
(s,a)-u^{(j-1)}(s,a)|ds\\
&  \leq2^{-m_{0}}\int_{0}^{t}Ke^{-(\alpha_{1}+\alpha_{2}+\sigma)(t-s)}%
\frac{K|a|e^{-\alpha_{1}s}}{2^{j-1}}ds+2^{-m_{0}}\int_{t}^{\infty}%
Ke^{\sigma(t-s)}\frac{K|a|e^{-\alpha_{1}s}}{2^{j-1}}ds\\
&  =2^{-m_{0}}\frac{K^{2}|a|}{2^{j-1}}e^{-(\alpha_{1}+\alpha_{2}+\sigma)t}%
\int_{0}^{t}e^{(\alpha_{2}+\sigma)s}ds+2^{-m_{0}}\frac{K^{2}|a|}{2^{j-1}%
}e^{\sigma t}\int_{t}^{\infty}e^{(-(\alpha_{1}+\sigma)s}ds\\
&  =2^{-m_{0}}\frac{K^{2}|a|}{2^{j-1}}e^{-(\alpha_{1}+\alpha_{2}+\sigma
)t}\frac{e^{(\alpha_{2}+\sigma)t}-1}{\alpha_{2}+\sigma}+2^{-m_{0}}\frac
{K^{2}|a|}{2^{j-1}}e^{\sigma t}\frac{0-e^{-(\alpha_{1}+\sigma)t}}{-(\alpha
_{1}+\sigma)}\\
&  \leq2^{-m_{0}}\frac{K^{2}|a|}{2^{j-1}}\frac{e^{-\alpha_{1}t}}{\alpha
_{2}+\sigma}+2^{-m_{0}}\frac{K^{2}|a|}{2^{j-1}}\frac{e^{-\alpha_{1}t}}%
{\alpha_{1}+\sigma}\\
&  <\frac{K|a|}{2^{j}}e^{-\alpha_{1}t}\quad(\mbox{recall that
$2^{-m_0}\leq \frac{\sigma}{4K}$})
\end{align*}
and furthermore,
\begin{align*}
&  |u^{(j+1)}(t,a)|\\
&  \leq|u^{(j)}(t,a)|+|u^{(j+1)}(t,a)-u^{(j)}(t,a)|\\
&  \leq|u^{(j)}(t,a)|+\frac{K|a|}{2^{j}}e^{-\alpha_{1}t}\\
&  \leq\sum_{k=1}^{j}\frac{K|a|}{2^{k}}e^{-\alpha_{1}t}\qquad
(\mbox{induction hypothesis on $u^{(k)}(t, a)$ for $k\leq j$})\\
&  \leq2K|a|e^{-\alpha_{1}t}\leq2Ke^{-\alpha_{1}t}\cdot2^{-d(m_{0}%
)}/2K=2^{-d(m_{0})}e^{-\alpha_{1}t}%
\end{align*}
Lastly we show that if $|u^{(k)}(t,a)-u^{(k)}(t,\tilde{a})|\leq3K|a-\tilde
{a}|$ holds for all $k\leq j$, then it holds for $j+1$.
\begin{align*}
&  |u^{(j+1)}(t,a)-u^{(j+1)}(t,\tilde{a})|\\
&  =\left\vert I_{\Gamma_{1}}(t)(a-\tilde{a})+\int_{0}^{t}I_{\Gamma_{1}%
}(t-s)\left(  F(u^{(j)}(s,a))-F(u^{(j)}(s,\tilde{a}))\right)  ds-\right. \\
&  -\left.  \int_{t}^{\infty}I_{\Gamma_{2}}(t-s)\left(  F(u^{(j)}%
(s,a))-F(u^{(j)}(s,\tilde{a}))\right)  ds\right\vert \\
&  \leq\left\vert I_{\Gamma_{1}}(t)(a-\tilde{a})\right\vert +\int_{0}%
^{t}||I_{\Gamma_{1}}(t-s)||\cdot\left\vert F(u^{(j)}(s,a))-F(u^{(j)}%
(s,\tilde{a}))\right\vert ds+\\
&  \int_{t}^{\infty}||I_{\Gamma_{2}}(t-s)||\cdot\left\vert F(u^{(j)}%
(s,a))-F(u^{(j)}(s,\tilde{a}))\right\vert ds\\
&  \leq K|a-\tilde{a}|+2^{-m_{0}}\left\vert u^{(j)}(s,a)-u^{(j)}(s,\tilde
{a})\right\vert \left(  \int_{0}^{t}Ke^{-(\alpha+\sigma)(t-s)}ds+\int
_{t}^{\infty}Ke^{\sigma(t-s)}ds\right) \\
&  \leq K|a-\tilde{a}|+2^{-m_{0}}\cdot3K|a-\tilde{a}|\cdot\left(  \frac
{K}{\alpha+\sigma}+\frac{K}{\sigma}\right) \\
&  =K|a-\tilde{a}|\left(  1+2^{-m_{0}}\frac{3K}{\alpha+\sigma}+2^{-m_{0}}%
\frac{3K}{\sigma}\right) \\
&  \leq3K|a-\tilde{a}|\quad
(\mbox{Recall that $2^{-m_0}\leq \frac{\sigma}{4K}$})
\end{align*}
The proof is complete.\smallskip

\textbf{Acknowledgments.}
D.\ Gra\c{c}a was
partially supported by \emph{Funda\c{c}\~{a}o para a Ci\^{e}ncia e a
Tecnologia} and EU FEDER POCTI/POCI via SQIG - Instituto de
Telecomunica\c{c}\~{o}es. N. Zhong was partially supported by the Charles
Phelps Taft Memorial Fund of the University of Cincinnati.

\bibliographystyle{alpha}

\end{document}